\newcommand{\assign}{:=}
\newcommand{\tmstrong}[1]{\textbf{#1}}
\newcommand{\tmverbatim}[1]{\text{{\ttfamily{#1}}}}
\newcommand{\tmop}[1]{\ensuremath{\operatorname{#1}}}
\newcommand{\x}{\mathbf{x}}
\newcommand{\y}{\mathbf{y}}
\newcommand{\pmvb}{\texttt{PMVB}}
\newcommand{\GPO}{\texttt{PMVB}}
\global\long\def\vertiii#1{\left\vert \kern-0.25ex  \left\vert \kern-0.25ex  \left\vert #1\right\vert \kern-0.25ex  \right\vert \kern-0.25ex  \right\vert }%
\global\long\def\and{\mathrm{and}}%
\global\long\def\Ebb{\mathbb{E}}%
\global\long\def\Pbb{\mathbb{P}}%
\providecommand{\lemmaname}{Lemma}
\providecommand{\remarkname}{Remark}
\providecommand{\theoremname}{Theorem}
\theoremstyle{plain}
\newtheorem{lem}{\protect\lemmaname}[section]
\theoremstyle{remark}
\newtheorem{rem}{\protect\remarkname}[section]
\theoremstyle{plain}
\newtheorem{thm}{\protect\theoremname}[section]
\theoremstyle{plain}
\theoremstyle{plain}
\providecommand{\corollaryname}{Corollary}
\theoremstyle{plain}
\crefname{section}{\textbf{section}}{\textbf{sections}}
\Crefname{section}{\textbf{Section}}{\textbf{Sections}}
\crefname{thm}{\textbf{theorem}}{\textbf{theorems}}
\Crefname{thm}{\textbf{Theorem}}{\textbf{Theorems}}
\crefname{lem}{\textbf{lemma}}{\textbf{lemmas}}
\Crefname{lem}{\textbf{Lemma}}{\textbf{Lemmas}}
\crefname{prop}{\textbf{proposition}}{\textbf{propositions}}
\Crefname{prop}{\textbf{Proposition}}{\textbf{Propositions}}
\crefname{algorithm}{\textbf{algorithm}}{\textbf{algorithms}}
\Crefname{algorithm}{\textbf{Algorithm}}{\textbf{Algorithms}}
\crefname{coro}{\textbf{Corollary}}{\textbf{corollaries}}
\Crefname{coro}{\textbf{Corollary}}{\textbf{corollaries}}
\crefname{definition}{\textbf{Definition}}{\textbf{definitions}}
\Crefname{definition}{\textbf{Definition}}{\textbf{definitions}}
\crefname{table}{\textbf{Table}}{\textbf{tables}}
\Crefname{table}{\textbf{Table}}{\textbf{tables}}
\crefname{figure}{\textbf{Figure}}{\textbf{figures}}
\Crefname{figure}{\textbf{Figure}}{\textbf{figures}}
\author[1]{Yanguang Chen\thanks{2017212301@live.sufe.edu.cn}}
\author[2]{Wenzhi Gao\thanks{gwz@stanford.edu, equal contribution}}
\author[3]{Wanyu Zhang\thanks{zwanyu@stanford.edu, equal contribution}}
\author[4]{Dongdong Ge\thanks{dongdong@gmail.com}}
\author[4]{Huikang Liu\thanks{hkliu@gmail.com}}
\author[2,3]{Yinyu Ye\thanks{yyye@stanford.edu}}
\affil[1]{Shanghai University of Finance and Economics}
\affil[2]{ICME, Stanford University}
\affil[3]{Management Science and Engineering, Stanford University}
\affil[4]{Antai College of Economics and Management, Shanghai Jiao Tong University}
\title{Data-driven Mixed Integer Optimization through Probabilistic Multi-variable Branching}
\begin{document}

\maketitle

This paper introduces Probabilistic Multi-Variable Branching ({\GPO}), a simple yet highly flexible technique for accelerating mixed-integer optimization using data-driven machine learning models. At its core, {\GPO} employs a multi-variable cardinality branching procedure that partitions the feasible region with data-driven hyperplanes, requiring only two lines of code for implementation. Moreover, {\GPO} is model-agnostic and can be readily integrated with various machine learning approaches. Leveraging tools from statistical learning theory, we develop interpretable hyperparameter selection strategies to enhance its performance. Furthermore, we extend our approach to a data-free setting, where the root LP relaxation serves as a surrogate prediction model, and we provide theoretical analysis to justify this idea. We evaluate {\GPO} by incorporating it into state-of-the-art MIP solvers and conducting experiments on both classic benchmark datasets and real-world instances. The results demonstrate its effectiveness in significantly improving MIP-solving efficiency.

\setlength{\parindent}{0pt}
\section{Introduction} \label{sec:intro}
Mixed Integer Programming (MIP){\cite{sahinidis2019mixed, wolsey2020integer}} is a core modeling tool in applications such as revenue management {\cite{benati2007mixed}}, production planning {\cite{pochet2006production}}, portfolio optimization {\cite{wolsey1999integer}}, and energy management {\cite{wood2013power}}. Despite their broad applicability, MIPs are hard to solve, and state-of-the-art solvers {\cite{cplex2009v12, ge2022cardinal, gurobi2021gurobi,optimizer2021v8}} rely on heuristics to be useful in practice. Designing effective heuristics has been a long-standing topic in MIP solving.\\

Among various MIP applications, many exhibit a strong online nature: similar instances with varying data parameters are solved on a regular basis \cite{wood2013power, pochet2006production}. These applications open up the way for integrating machine learning (ML) techniques with MIP solving: ML models can extract solution patterns from historical data and use this information to accelerate repeated solving tasks. When applied to models with similar structures and varying data parameters, these methods often yield improved performance.\\

\texttt{ML+MIP} approaches typically start by designing or specifying a learning model to handle the MIP information. These approaches range from classic machine learning models to sophisticated deep network architectures. Examples include supervised methods such as $k$-nearest neighbors and support vector machine \cite{xavier2021learning}, tree-based models \cite{alvarez2017machine}, and the widely adopted graph neural networks (GNN) \cite{han2023gnn, liu2022learning}. Recently, there has been increasing interest in leveraging reinforcement learning techniques to accelerate MIP solving \cite{berthold2021learning, scavuzzo2022learning, song2020general, wu2021learning} (see \Cref{table_first} for a summary and further details).  These advanced learning approaches more efficiently capture the structure of MIP instances and undergird the success of these \texttt{ML+MIP} attempts.

\begin{table}[!ht]
\centering
\caption{Summary of \texttt{ML+MIP} approaches. GNN: graph neural networks; RL: reinforcement learning; DNN: Deep Neural Network; DQN: Deep Q-Network \label{table_first}}
    \begin{tabular}{ccc}
    \toprule
       Ref & Model & Application  \\ 
        \midrule
        \cite{nair2020solving} & GNN & Learn to dive and branch  \\ 
        \cite{xavier2021learning} & $k$-NN and SVM & Learn affine subspaces  \\ 
        \cite{nazari2018reinforcement} & RL &  Generate near-optimal solutions  \\ 
        \cite{khalil2022mip} & GNN & Learn variable biases  \\ 
        \cite{gasse2019exact} & GNN & Learn a variable selection policy  \\ 
        \cite{ding2020accelerating} & GNN & Learn to predict solutions  \\ 
        \cite{alvarez2017machine} & Extremely Randomized Trees & Learn to branch  \\ 
        \cite{han2023gnn} & GNN & Predict the variable probability  \\ 
        \cite{zarpellon2021parameterizing} & GNN & Learn to generalize branching  \\ 
    \cite{liu2022learning} & GNN & Learn neighborhood size for local branching  \\ 
        \cite{gupta2020hybrid} & GNN, DNN,  RL &  Develop a GNN-MIP hybrid model  \\ 
        \cite{sonnerat2021learning} & GNN, RL, DQN, DNN & Learn to select neighborhoods for LNS  \\ 
        \cite{qu2022improved} & RL, GNN & Reinforcement learning-based branching  \\ 
        \cite{lin2022learning} & Transformer & Tree-aware transformer-based branching  \\ 
        \bottomrule
    \end{tabular}
\end{table}

After a learning model is established, its output is used to speed up MIP solving, usually by modifying the behavior of the standard branch-and-bound procedure.
 In {\cite{alvarez2017machine,khalil2016learning}}, the authors use
machine learning models to imitate the behavior of an expensive branching rule
in MIP. {\cite{ding2020accelerating}} directly predict the values of the binary variables and proposes to add a cut that upper-bounds the distance between prediction and the MIP solution, but they do not further analyze this method. {\cite{nair2020solving}} also adopt a graph representation of
MIP to predict binary variables, and proposes to combine the prediction with MIP diving heuristic (neural
diving). In \cite{gasse2019exact}, the authors propose to learn the branching variable selection policy during the branch-and-bound procedure.
{\cite{lee2019learning}} adopts imitation learning to learn a good pruning policy for nonlinear MIPs.
Though most \tmverbatim{ML+MIP} studies are empirical, there exist works in
statistical learning theory that study the learnability of MIPs. In
{\cite{balcan2018learning}}, the authors derive the generalization bound for
learning hybrid branching rules that take convex combination of different
score functions. In {\cite{balcan2022improved}}, the bound is further
improved for the MIP branch and bound procedure. In the same line of work, \cite{balcan2022structural} analyze the learnability of mixed integer and Gomory cutting planes.\\

While existing \texttt{ML+MIP} methodologies have demonstrated promising results, they suffer from two key drawbacks. First, most available methods require modifications to the branch-and-bound procedure of standard MIP solvers (for example, \cite{nair2020solving,gasse2019exact}), making them challenging to deploy without access to the solver's source code or an advanced call-back interface. Second, these \texttt{ML}-based approaches often involve hyperparameters that are not easily interpretable, thereby demanding extensive manual tuning efforts.\\

In view of the challenges, in this paper we propose
{\GPO}, a Probabilistic
Multi-variable Branching approach for MIP solving. \tmverbatim{PMVB} relies on a probabilistic estimate of binary (integer) variables and applies a simple but efficient multi-variable cardinality branching procedure that constructs branching hyperplanes that take the form
\[\textstyle \sum_{i \in \mathcal{S}} y_i \leq \xi \quad \text{and} \quad \sum_{i \in
   \mathcal{S}} y_i \geq \xi + 1, \quad \{ y_i \} \text{ binary}.\]
Our method is simple yet effective, and in particular, our contributions include:\\

First, we develop a risk pooling technique to generate statistical inequalities (branching hyperplanes) on MIP binary variables using the probabilistic output of any ML prediction model. We show that the MIP optimal solution satisfies the inequalities with high probability under statistical assumptions \cite{vershynin2018high}. Adding these inequalities to the original problem could reduce the solving time at almost no additional implementation cost. The method is simple, effective, and compatible with most ML models/MIP solvers, and it can be applied as a standalone heuristic.\\

Based on the hyperplanes \cite{karamanov2011branching,fischetti2003local},  we propose \tmverbatim{PMVB}, a probabilistic multi-variable cardinality branching \cite{gamrath2015branching, yang2021multivariable} method to accelerate MIP solving. Specifically, we create two probabilistic hyperplanes for variables predicted to be 0 and 1. These two hyperplanes divide the problem into four subproblems. {\pmvb} first solves the subproblem with these two inequalities holding with high probability and gets a near-optimal objective value $c$. Then, we add the inequality that the objective value is no bigger than $c$ (for minimization) to the other three subproblems, so that they could be quickly pruned.\\

We extend our risk pooling idea to the ``data-free'' case, where no history data is available and ML prediction is inaccessible. In this case, we instead use the solution to the MIP root relaxation as the prediction result and apply {\pmvb}. Numerical results on various benchmark datasets show that {\pmvb} still yields a nontrivial improvement.

\subsection{Outline of the paper}
The paper is organized as follows. In  \Cref{sec:gpo}, we introduce {\GPO} and an underlying multi-variable cardinality branching procedure, giving a theoretical analysis of its behavior using statistical learning theory. In  \Cref{sec:prac}, we discuss the practical aspects when implementing {\GPO}. \Cref{sec:data-free} discusses the data-free variant of {\GPO} where we consider root LP solution as an ML model output. Finally, in \Cref{sec:exp} we conduct numerical experiments on both synthetic and real-life MIP instances to demonstrate the efficiency of our method.

\section{Data-driven probabilistic multi-variable branching}\label{sec:gpo}

This section introduces {\pmvb}, a data-driven probabilistic multi-variable cardinality branching procedure. In brief, cardinality branching partitions the feasible region by adding complementary hyperplanes involving multiple variables, while {\pmvb} adjusts the intercepts of these hyperplanes using the idea of risk pooling and concentration inequalities. We begin by formally defining parametric mixed-integer programs (MIPs) and reviewing the necessary preliminaries from statistical learning theory.

\subsection{Parametric MIP and solution map}
We consider a parametric MIP \cite{jenkins1982parametric} $\mathcal{P} (\xi)$ with $n$ binary variables, defined formally below. Throughout, we assume the problem with fixed binary variables is tractable and restrict our attention to problems only involving binary variables:
\begin{equation}\label{eq:prototype}
	\mathcal{P} (\xi) \assign \big[ \; \min_{\x \in \mathbb{R}^d, \, \y \in \{ 0, 1 \}^n} ~~c ( \x, \y ; \xi
  ) \quad \text{subject to} \quad h( \x, \y ; \xi ) \leq 0 \; \big]
\end{equation}
where the functions $c, h$ encapsulate the structural components of the problem, and $\xi \sim \Xi$ represents variable data parameters drawn from a distribution $\Xi$. We focus on efficiently solving a sequence of instances parameterized by $\xi$. Let $\y^{\star} (\xi) = (y_1^{\star}
(\xi), \ldots, y_n^{\star} (\xi))$ denote the corresponding optimal solution mapping. \\

In data-driven approaches for parametric MIPs, it is often assumed that a large collection of data-solution pairs $\{(\xi, \y^\star(\xi))\}$ is available, obtained either from historical solution records or through simulation. 
Given this data, one can train predictive models to capture and approximate $\y^\star(\xi)$, the underlying mapping from problem parameters $\xi$ to optimal solutions. This approach involves the following three main stages:

\paragraph{Workflow of standard data-driven approaches for MIP.}
\begin{enumerate}[leftmargin=30pt, label=\textbf{P\arabic*:}, ref=\textbf{P\arabic*}]
\item A set of solved instances $\mathcal{S}=\{(\xi, \y^\star{(\xi)})\}$, obtained from historical records or simulations. \label{P1}
\item Machine learning model $\hat{\y}$ trained on the dataset $\mathcal{S}$. \label{P2}
\item A mechanism that integrates the predictions $\hat{\y}(\xi)$ into the MIP solving process. \label{P3}
\end{enumerate}

Despite the wide adoption of this methodology, to our knowledge, considerable emphasis has been placed on the generation of predictive models \ref{P2}, while less attention has been paid to systematically integrating these predictions into the MIP solving process \ref{P3}. Motivated by this gap, the following subsections demonstrate how a cardinality branching procedure provides a simple yet effective solution to \ref{P3}, distinguishing our {\GPO} approach from prior \texttt{ML+MIP} frameworks. 


\subsection{Statistical learning theory for solving MIP}
We now explain how learning $\y^{\star}(\xi)$ fits naturally into statistical learning theory \cite{vapnik2013nature}. The following assumptions introduce randomness to the MIP prediction task.
\begin{enumerate}[start=1,label=\textbf{A\arabic*:},ref=\rm{\textbf{A\arabic*}}]
\item (i.i.d. distribution) The problem instances are i.i.d. generated according to parameters drawn from a data distribution $\xi \sim \Xi$. \label{A1}
\item (Binomial posterior estimate) Given $\xi \sim \Xi$, the MIP $\mathcal{P} (\xi)$ has one unique
solution, i.e., each binary decision variable is deterministically either 0 or 1
. \label{A2}
\end{enumerate}
Under assumption \ref{A2}, solving a MIP can be viewed as a binary classification task. For each $j$, a binary classifier $\hat{y}_j$ takes data parameter $\xi$ as input and produces a binary prediction. The optimal classifier in this context is the Bayes classifier defined by the conditional probability $\mathbb{P} \{ y^{\star}_j (\xi) = 1| \xi \}$. Consequently, a standard MIP solver can be viewed as one particular realization of such a Bayes classifier, which takes exponential time in the worst case.

However, classification by a MIP solver is computationally prohibitive. Instead, we select a suitable hypothesis class $\mathcal{Y}_j$ of candidate classifiers and apply empirical risk minimization (ERM) to learn an effective classifier $\hat{y}_j \in \mathcal{Y}_j$. The learning
procedure proceeds as follows: we collect offline data-solution pairs $\{ (\xi_i, \y^\star(\xi_i)) \}_{i=1}^m$, choose suitable function classes of classifiers
$\mathcal{Y}_j$, and compute the empirical risk minimizer $\hat{y}_j$ via:
\begin{equation*}
 \hat{y}_j \assign     \textstyle\underset{y_j \in \mathcal{Y}_j}{\arg \min}~  \big\{
    \frac{1}{m}  \sum_{i = 1}^m \mathbb{I} \{ y_j (\xi_i) \neq y^{\star}_j (\xi_i)
    \} \big\},\quad \text{and} \quad 
 e_j  \assign \displaystyle\min_{y_j \in \mathcal{Y}_j} \textstyle\big\{ \frac{1}{m} \sum_{i =
    1}^m \mathbb{I} \{ y_j (\xi_i) \neq y^{\star}_j (\xi_i) \} \big\},
\end{equation*}
where $\mathbb{I}\{\cdot\}$ is the 0-1 indicator function, $m$ is the number of solved instances, and  $e_j$ denotes ERM error. Given a
new instance $\mathcal{P} (\xi)$, the trained classifier $\hat{y}_j (\xi)$ is used to predict the optimal solution component $y_j^{\star} (\xi)$. In practice, a convex surrogate loss function \cite{reid2009surrogate} is employed instead of the indicator function to ensure the computational tractability of the ERM.

The performance guarantee for the classifier $\hat{y}_j$ can be characterized using standard results from statistical learning theory. Specifically, we have the following lemma:
\begin{lem}[\cite{vapnik2013nature}] \label{lem:learn-sol}
    Under {\ref{A1}} and {\ref{A2}}, letting $\hat{y}_j$ be the ERM
  classifier under 0-1 loss and $e_j$ be the ERM error, then with probability at least $1 - \delta$,
  \begin{equation}\label{ineq:vc}
      \mathbb{P}_{\xi \sim \Xi} \{ \hat{y}_j (\xi) \neq y^{\star}_j (\xi) \} \leq e_j
     + \sqrt{{\tfrac{1}{m}\big[\tmop{vc} (\mathcal{Y}_j) ( \log \tfrac{2 m}{\tmop{vc}
     (\mathcal{Y}_j)} + 1) + \log \tfrac{4}{\delta}}\big]},
  \end{equation} 
  where $\tmop{vc} (\mathcal{Y}_j)$ is the VC dimension of $\mathcal{Y}_j$.
\end{lem}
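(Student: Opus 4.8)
The plan is to recognize \Cref{lem:learn-sol} as the classical VC uniform-convergence (generalization) bound, specialized to the binary-classification view of predicting the $j$-th coordinate $y_j^{\star}(\xi)$. Under \ref{A2} each instance $\xi$ together with its label $y_j^{\star}(\xi) \in \{0,1\}$ forms a labeled example, and by \ref{A1} the sample $\{(\xi_i, y_j^{\star}(\xi_i))\}_{i=1}^m$ is i.i.d. from the induced distribution. Writing $R(y_j) \assign \mathbb{P}_{\xi\sim\Xi}\{y_j(\xi)\neq y_j^{\star}(\xi)\}$ for the population $0$-$1$ risk and $\hat{R}(y_j) \assign \frac{1}{m}\sum_{i=1}^m \mathbb{I}\{y_j(\xi_i)\neq y_j^{\star}(\xi_i)\}$ for the empirical risk, the left-hand side of \eqref{ineq:vc} is exactly $R(\hat{y}_j)$, and by definition of the ERM one has $\hat{R}(\hat{y}_j)=e_j$. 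The target inequality therefore reduces to the one-sided uniform deviation bound $\sup_{y_j\in\mathcal{Y}_j}\big(R(y_j)-\hat{R}(y_j)\big) \le \sqrt{\tfrac{1}{m}[\tmop{vc}(\mathcal{Y}_j)(\log\tfrac{2m}{\tmop{vc}(\mathcal{Y}_j)}+1)+\log\tfrac{4}{\delta}]}$ holding with probability at least $1-\delta$.

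To establish the uniform bound I would follow the standard three-step route. First, \emph{symmetrization}: introduce an independent ghost sample $\{\xi_i'\}_{i=1}^m$ and bound the probability that $R(y_j)-\hat{R}(y_j)$ exceeds the threshold for some $y_j$ by (twice) the probability that the empirical risks on the two half-samples differ by that amount, thereby replacing the unknown population quantity by an empirical one. Second, \emph{reduction to a finite class}: on the fixed $2m$ points $\{\xi_i\}\cup\{\xi_i'\}$ the classifiers in $\mathcal{Y}_j$ realize at most $\Pi_{\mathcal{Y}_j}(2m)$ distinct labelings, so a random-sign (Rademacher) argument together with Hoeffding's inequality and a union bound over these labelings controls the symmetrized deviation. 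Third, apply the \emph{Sauer--Shelah lemma} $\Pi_{\mathcal{Y}_j}(2m)\le(\tfrac{2em}{\tmop{vc}(\mathcal{Y}_j)})^{\tmop{vc}(\mathcal{Y}_j)}$ to bound the growth function purely in terms of the VC dimension; taking logarithms yields $\tmop{vc}(\mathcal{Y}_j)(\log\tfrac{2m}{\tmop{vc}(\mathcal{Y}_j)}+1)$ (the $+1$ being $\log e$), while the confidence budget contributes the $\log\tfrac{4}{\delta}$ term, the constant $4$ absorbing the factor-$2$ losses from symmetrization and the union bound.

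Combining these, I would invert the resulting exponential tail bound to solve for the admissible deviation at confidence $1-\delta$, producing the square-root complexity term in \eqref{ineq:vc}; substituting $\hat{R}(\hat{y}_j)=e_j$ then gives the claim. Since this is the fundamental theorem of VC theory, the cleanest exposition is to invoke \cite{vapnik2013nature} for the uniform-convergence inequality and simply verify that the MIP-coordinate classification problem meets its hypotheses: i.i.d. data from \ref{A1}, a well-defined binary label from \ref{A2}, and finite $\tmop{vc}(\mathcal{Y}_j)$.

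I expect the genuine technical content — and hence the main obstacle for a self-contained proof — to be the symmetrization step together with the accompanying concentration-plus-union-bound argument, since passing from the infinite hypothesis class $\mathcal{Y}_j$ to the effectively finite complexity measured by $\tmop{vc}(\mathcal{Y}_j)$ is precisely where care is required; everything downstream (Sauer--Shelah and the algebraic inversion) is routine.
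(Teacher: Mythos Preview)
The paper does not supply its own proof of \Cref{lem:learn-sol}; it is stated as a classical result and attributed directly to \cite{vapnik2013nature}, then used as a black box. Your outline (symmetrization to a ghost sample, union bound over the finitely many labelings on $2m$ points, Hoeffding, and Sauer--Shelah to convert the growth function into the $\tmop{vc}(\mathcal{Y}_j)(\log\tfrac{2m}{\tmop{vc}(\mathcal{Y}_j)}+1)$ term) is precisely the standard derivation of this bound and is correct; there is nothing to compare against, and for the purposes of this paper simply citing the result as you suggest in your final paragraph is entirely in line with what the authors do.
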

\Cref{lem:learn-sol} provides an upper bound on the prediction error of the classifier $\hat{y}_j$, showing that the upper bound improves as the sample size $m$ increases. To facilitate our analysis, we introduce an additional assumption ensuring that the trained classifier $\hat{y}_j$ is meaningful:
\begin{enumerate}[start=3,label=\textbf{A\arabic*:},ref=\rm{\textbf{A\arabic*}}]
\item (Learnability) The number of solved instances $m$ is large enough, such that for some properly chosen $\delta$ and for each $j \in [n]$, we have
\label{A3}
\begin{equation*}
\Delta_j^{\delta} \assign 1 - e_j - \sqrt{\tfrac{1}{m} \big[{\tmop{vc} (\mathcal{Y}_j)
   ( \log \tfrac{2 m}{\tmop{vc} (\mathcal{Y}_j)} + 1 ) + \log
   \tfrac{4}{\delta}}\big]} > 0.
\end{equation*}
\end{enumerate}
After establishing a probabilistic interpretation of $\hat{y}_j (\xi)$ and characterizing its performance via \Cref{lem:learn-sol}, we are ready to leverage these properties to introduce the {\pmvb} idea for efficiently solving parametric MIPs. 

\subsection{Risk-pooling and multi-variable cardinality branching}

Following the previous discussions, suppose we have obtained predictions $\hat{\y} = (\hat{y}_1, \ldots,
\hat{y}_n)$. Perhaps the most straightforward approach is to directly fix some binary variables according to these predictions. Specifically, for any instance $\mathcal{P}(\xi)$, we can set $y_j = \hat{y}_j (\xi)$ for each $j$ in a chosen subset $\mathcal{J} \subseteq [n]$. While this approach is straightforward, it quickly becomes unsafe -- a rough estimate shows that the probability of correctly fixing all variables decreases exponentially when more variables are involved:
\begin{equation*}
\textstyle \mathbb{P} \{ \hat{y}_j (\xi) = y_j^{\star} (\xi) , j \in \mathcal{J}
     \} \geq (1 - \delta)^{| \mathcal{J} |} \prod_{j \in \mathcal{J}}
     \Delta_j^{\delta},
\end{equation*}
where $\delta$ is defined in \ref{A3}. This observation highlights the inherent unreliability of the variable fixing strategies, where the risk of misclassification accumulates multiplicatively. 

To mitigate this issue, another idea is to apply \textit{risk pooling}, grouping these ``risky'' binary predictions together to reduce the overall probability of errors collectively. Specifically, we define the set $\mathcal{U} \assign \{
j : \hat{y}_j (\xi) = 1 \}$ and $\mathcal{L} \assign \{ j : \hat{y}_j (\xi) = 0 \}$
based on the predicted values. Using these sets, we construct two hyperplanes, referred to as probabilistic cardinality branching hyperplanes:
\begin{equation}\label{eqn:branching-hyperplanes}
\mathcal{C}_{\mathcal{U}}:  \textstyle \sum_{j \in \mathcal{U}} y_j \geq \zeta_1 \quad\text{and}\quad  \mathcal{C}_{\mathcal{L}}:\sum_{j \in \mathcal{L}} y_j \leq \zeta_2,
\end{equation}
where $\zeta_1$ and $\zeta_2$ are parameters that need to be determined. Unlike traditional multi-variable branching strategies in the MIP literature, the probabilistic interpretation of $\{ \hat{y}_j(\xi) \}$ provides a hint for selecting $\zeta_1, \zeta_2$ while characterizing the probabilistic behavior of the resulting four regions. To proceed with the analysis, we introduce the following assumption:
   \begin{enumerate}[start=4,label=\textbf{A\arabic*:},ref=\rm{\textbf{A\arabic*}}]
\item (Independent error rates) For any fixed $\xi \in \Xi$, the events $\{\hat{y}_j(\xi) = y^\star_j(\xi)\}$ for different indices $j \in [n]$ are independent. \label{A4}
\end{enumerate}
\begin{rem}
Assumption \ref{A4}, while quite strong, is still a standard assumption in the literature on ensemble methods in machine learning \cite{freund1997decision}. Here, we utilize it to highlight the effectiveness of the risk pooling concept. In \Cref{sec:prac}, we will demonstrate that this assumption can be removed without compromising the effectiveness of our approach.
\end{rem}

The probabilistic interpretation, combined with the independence assumption \ref{A4}, enables us to interpret the two hyperplanes in \eqref{eqn:branching-hyperplanes} as concentration inequalities:
\begin{thm} \label{thm:mvb}
Under the same assumptions as {\textbf{Lemma \ref{lem:learn-sol}}}, along with \ref{A3} and \ref{A4},
    consider a new MIP instance $\mathcal{P} (\xi)$ with $ \xi \sim \Xi$. Letting
  $\mathcal{U}= \{ j : \hat{y}_j (\xi) = 1 \}$ and $\mathcal{L}= \{ j : \hat{y}_j
  (\xi) = 0 \}$, for any $\gamma \geq 0$, we have
  \begin{equation*}
  \begin{aligned}
  	\textstyle \mathbb{P} \big\{ \sum_{j \in \mathcal{U}} y_j^{\star} (\xi) \geq (1-\delta)\sum_{j
     \in \mathcal{U}} \Delta_j^{\delta} - \gamma \big\} \geq{} & 1 - \exp \big(
     - \tfrac{2 \gamma^2}{| \mathcal{U} |} \big)\\
     \textstyle \mathbb{P} \big\{ \sum_{j \in \mathcal{L}} y_j^{\star} (\xi) \leq |
     \mathcal{L} | - (1-\delta)\sum_{j \in \mathcal{L}} \Delta_j^{\delta} + \gamma
     \big\} \geq{} & 1 - \exp \big( - \tfrac{2 \gamma^2}{| \mathcal{L} |}
     \big)
  \end{aligned}
   \end{equation*}
  where the probability is taken over both the training set and the new instance.
\end{thm}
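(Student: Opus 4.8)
The plan is to reduce both inequalities to a single application of Hoeffding's inequality applied to sums of independent correct-classification indicators, after first pinning down the right per-coordinate success probability. The key first step is a purely deterministic rewriting. For any index $j \in \mathcal{U}$ we have $\hat{y}_j(\xi) = 1$, so $y_j^\star(\xi) = 1$ holds precisely when the classifier is correct at $j$; writing $Z_j := \mathbb{I}\{\hat{y}_j(\xi) = y_j^\star(\xi)\}$ this gives $y_j^\star(\xi) = Z_j$ and hence $\sum_{j \in \mathcal{U}} y_j^\star(\xi) = \sum_{j \in \mathcal{U}} Z_j$. Symmetrically, for $j \in \mathcal{L}$ we have $\hat{y}_j(\xi) = 0$, so $y_j^\star(\xi) = 1 - Z_j$ and $\sum_{j \in \mathcal{L}} y_j^\star(\xi) = |\mathcal{L}| - \sum_{j \in \mathcal{L}} Z_j$. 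This converts the two claims into a lower-tail and an upper-tail statement about the same kind of sum of Bernoulli variables.

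Next I would establish the per-coordinate mean bound $\mathbb{E}[Z_j] \geq (1-\delta)\Delta_j^\delta$, which is exactly where the factor $(1-\delta)$ originates. Let $p_j$ denote the conditional success probability $\mathbb{P}_{\xi}\{\hat{y}_j(\xi) = y_j^\star(\xi)\}$ given the training set, viewed as a random variable in the training sample. \Cref{lem:learn-sol} together with \ref{A3} shows that $p_j \geq \Delta_j^\delta > 0$ on an event of training sets having probability at least $1-\delta$. Since $p_j \geq 0$ deterministically, taking expectation over the training set yields $\mathbb{E}[Z_j] = \mathbb{E}_{\mathrm{train}}[p_j] \geq \Delta_j^\delta \cdot (1-\delta)$, where the outer expectation is over the joint randomness of the training set and the fresh instance.

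With the means controlled, I would invoke \ref{A4} to treat $\{Z_j\}_{j \in \mathcal{U}}$ as independent and $\{0,1\}$-valued, and apply Hoeffding's inequality to obtain $\mathbb{P}\{\sum_{j\in\mathcal{U}} Z_j \leq \mathbb{E}[\sum_{j\in\mathcal{U}} Z_j] - \gamma\} \leq \exp(-2\gamma^2/|\mathcal{U}|)$, the factor $|\mathcal{U}|$ in the exponent arising because each summand has range one. Since $\mathbb{E}[\sum_{j\in\mathcal{U}} Z_j] = \sum_{j\in\mathcal{U}} \mathbb{E}[Z_j] \geq (1-\delta)\sum_{j\in\mathcal{U}}\Delta_j^\delta$, the event $\{\sum_{j\in\mathcal{U}} Z_j \geq \mathbb{E}[\sum Z_j] - \gamma\}$ is contained in $\{\sum_{j\in\mathcal{U}} y_j^\star(\xi) \geq (1-\delta)\sum_{j\in\mathcal{U}}\Delta_j^\delta - \gamma\}$, which yields the first inequality after taking complements. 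The $\mathcal{L}$ bound is identical: apply the lower-tail Hoeffding bound to $\sum_{j\in\mathcal{L}} Z_j$ and substitute $\sum_{j\in\mathcal{L}} y_j^\star(\xi) = |\mathcal{L}| - \sum_{j\in\mathcal{L}} Z_j$, so that a lower deviation of $\sum_{j\in\mathcal{L}} Z_j$ becomes the stated upper threshold $|\mathcal{L}| - (1-\delta)\sum_{j\in\mathcal{L}}\Delta_j^\delta + \gamma$.

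The main obstacle is the probabilistic bookkeeping rather than any single estimate. Two sources of randomness are coupled: the training set determines the classifiers $\hat{y}_j$ and hence the data-dependent sets $\mathcal{U}, \mathcal{L}$, while the new instance $\xi$ is drawn independently; yet $|\mathcal{U}|, |\mathcal{L}|$ and the sets of summation indices appearing in the bound are themselves random. The delicate point is that to treat $|\mathcal{U}|$ as a fixed quantity in the Hoeffding exponent one must condition on the realized prediction pattern, and one has to check that this conditioning is compatible with both the independence in \ref{A4} and the mean lower bound $(1-\delta)\Delta_j^\delta$ derived unconditionally; in particular, conditioning on $\{j \in \mathcal{U}\}$ replaces the overall accuracy controlled by \Cref{lem:learn-sol} with a class-conditional quantity, so the cleanest route is to derive $\mathbb{E}[Z_j]$ first and perform the concentration step after conditioning, verifying that the two steps compose without degrading the constant. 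Reconciling this measurability issue is the part that requires the most care.
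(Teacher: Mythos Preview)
Your proposal is correct and follows essentially the same route as the paper: rewrite the sums via the correct-classification indicators $Z_j=\mathbb{I}\{\hat y_j(\xi)=y_j^\star(\xi)\}$, use \Cref{lem:learn-sol} together with \ref{A3} to obtain the per-coordinate mean bound $\mathbb{E}[Z_j]\ge(1-\delta)\Delta_j^\delta$, and then apply Hoeffding's inequality under \ref{A4}. Your treatment of the $(1-\delta)$ factor and of the conditioning subtlety is in fact more explicit than the paper's own proof, which handles the same steps but glosses over the measurability issue you flag.
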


\begin{proof}
For any $\xi \in \Xi$ and any $j \in \mathcal{U}$, we have
\begin{equation}\label{ineq:exp:ystar}
    \mathbb{E} [y_j^\star(\xi)] = \mathbb{P} \{ y^{\star}_j (\xi) = 1\} = \mathbb{P} \{ y^{\star}_j (\xi) = \hat{y}_j (\xi) \} \geq (1 - \delta)\Delta_j^{\delta},
\end{equation}
where the first equality holds because $y^{\star}_j (\xi)$ is a binary variable, the second equality holds because $\hat{y}_j (\xi) = 1$ for any $j \in \mathcal{U}$, and the last inequality holds because \Cref{lem:learn-sol} suggests that \eqref{ineq:vc} holds with probability at least $1 - \delta$. Then, based on \ref{A4},  we could apply the Hoeffding's inequality (see \textbf{Lemma \ref{lem:hoeffding}} in Appendix) to the sum $\sum_{j \in \mathcal{U}} y_j^{\star} (\xi)$ to get
\begin{equation}\label{ineq:sum:ystar}
    \textstyle \mathbb{P} \big\{ \sum_{j \in \mathcal{U}} y_j^{\star} (\xi) \geq \sum_{j \in \mathcal{U}} \mathbb{E} [y_j^\star(\xi)] - \gamma \big\} \geq{} 1 - \exp \big(
     - \tfrac{2 \gamma^2}{| \mathcal{U} |} \big).
\end{equation}
We complete the proof of the first part by combining \eqref{ineq:exp:ystar} and \eqref{ineq:sum:ystar} together. The proof of the second part is almost the same, so we omit it here.
\end{proof}

Similar to single-variable branching, these two hyperplanes $\mathcal{C}_{\mathcal{U}}$ and $\mathcal{C}_{\mathcal{L}}$---along with their complements $\bar{\mathcal{C}}_{\mathcal{U}}$ and $\bar{\mathcal{C}}_{\mathcal{L}}$---partition the feasible region into four disjoint subregions:  $\mathcal{P}_{\mathcal{C}_{\mathcal{U}}, \mathcal{C}_{\mathcal{L}}},
\mathcal{P}_{\mathcal{C}_{\mathcal{U}}, \bar{\mathcal{C}}_{\mathcal{L}}},
\mathcal{P}_{\bar{\mathcal{C}}_{\mathcal{U}}, \mathcal{C}_{\mathcal{L}}},
\mathcal{P}_{\bar{\mathcal{C}}_{\mathcal{U}},
   \bar{\mathcal{C}}_{\mathcal{L}}}$. This approach is recognized in the MIP literature as multi-variable cardinality branching or local branching \cite{fischetti2003local, gamrath2015branching}. 

\begin{algorithm}[h] 
\caption{Probabilistic multi-variable cardinality branching}
	\begin{algorithmic}[1]
	\STATE{\textbf{input}: MIP data parameter $(\xi)$, data-driven MIP classifiers $\{\hat{y}_j\}$}
    
    \STATE{\textbf{begin}}
    
    \quad{\tmstrong{compute}} $\hat{\y}(\xi)$
    
    \quad{\tmstrong{let}} $\mathcal{U}= \{ i : \hat{y}_i = 1 \},
    \mathcal{L}= \{ i : \hat{y}_i = 0 \}$
    
    \quad{\tmstrong{generate}}
 $\mathcal{C}_{\mathcal{U}}, \mathcal{C}_{\mathcal{L}},
\bar{\mathcal{C}}_{\mathcal{U}}, \bar{\mathcal{C}}_{\mathcal{L}}$
and four subproblems $\{
\mathcal{P}_{\mathcal{C}_{\mathcal{U}}, \mathcal{C}_{\mathcal{L}}},
\mathcal{P}_{\mathcal{C}_{\mathcal{U}}, \bar{\mathcal{C}}_{\mathcal{L}}},
\mathcal{P}_{\bar{\mathcal{C}}_{\mathcal{U}}, \mathcal{C}_{\mathcal{L}}},
\mathcal{P}_{\bar{\mathcal{C}}_{\mathcal{U}},
    \bar{\mathcal{C}}_{\mathcal{L}}} \}$
    
    \quad{\tmstrong{solve}} $\mathcal{P}_{\mathcal{C}_{\mathcal{U}},
    \mathcal{C}_{\mathcal{L}}}$
    and get $\y^{\star}_{\mathcal{C}_{\mathcal{U}},
    \mathcal{C}_{\mathcal{L}}}$ \\
    \quad{(optional) \tmstrong{solve}}  $
\mathcal{P}_{\mathcal{C}_{\mathcal{U}}, \bar{\mathcal{C}}_{\mathcal{L}}},
\mathcal{P}_{\bar{\mathcal{C}}_{\mathcal{U}}, \mathcal{C}_{\mathcal{L}}},
\mathcal{P}_{\bar{\mathcal{C}}_{\mathcal{U}},
\bar{\mathcal{C}}_{\mathcal{L}}}$ 
    and get $\y^{\star}_{\mathcal{C}_{\mathcal{U}},
\bar{\mathcal{C}}_{\mathcal{L}}},
\y^{\star}_{\bar{\mathcal{C}}_{\mathcal{U}}, \mathcal{C}_{\mathcal{L}}},
\y^{\star}_{\bar{\mathcal{C}}_{\mathcal{U}},
    \bar{\mathcal{C}}_{\mathcal{L}}}$

    \STATE{\textbf{end}}
    
    {\tmstrong{output}} the best solution among $\big\{
\y^{\star}_{\mathcal{C}_{\mathcal{U}}, \mathcal{C}_{\mathcal{L}}},
\y^{\star}_{\mathcal{C}_{\mathcal{U}}, \bar{\mathcal{C}}_{\mathcal{L}}},
\y^{\star}_{\bar{\mathcal{C}}_{\mathcal{U}}, \mathcal{C}_{\mathcal{L}}},
\y^{\star}_{\bar{\mathcal{C}}_{\mathcal{U}},
    \bar{\mathcal{C}}_{\mathcal{L}}} \big\}$
		
\end{algorithmic}\label{alg1}
\end{algorithm}

By adjusting the intercepts of the hyperplanes based on concentration inequalties,  {\pmvb} ensures the optimal solution lies within $\mathcal{P}_{\mathcal{C}_{\mathcal{U}}, \mathcal{C}_{\mathcal{L}}}$ (the region defined by $\mathcal{C}_{\mathcal{U}}$ and $\mathcal{C}_{\mathcal{L}}$) with high probability. Although the bound provided by \textbf{Theorem \ref{thm:mvb}} may be weak for practical applications, it offers valuable theoretical insight and justification for our approach. The resulting algorithm, outlined in \textbf{Algorithm \ref{alg1}}, operates by solving the MIP within the four regions induced by the two hyperplanes. Since {\pmvb} is implemented by simply adding constraints, it can be integrated into any MIP solver with minimal modifications, requiring only a few lines of extra code.
\begin{rem} \label{rem:mvb}
One advantage of \textbf{Algorithm \ref{alg1}} is that, in some real-life applications, the computation can be restricted to the first subproblem under the following conditions: 1) there is high confidence that $\mathcal{P}_{\mathcal{C}_{\mathcal{U}}, \mathcal{C}_{\mathcal{L}}}$ contains the optimal solution, or
2) $\mathcal{P}_{\mathcal{C}_{\mathcal{U}}, \mathcal{C}_{\mathcal{L}}}$ already yields a satisfying solution.
In the worst case, all four subproblems are solved, ensuring no loss of optimality. More broadly, {\pmvb} can be employed either as a branching strategy to solve the MIP exactly or as a heuristic to accelerate the search for a high-quality primal solution.
\end{rem}

In the next section, we introduce practical aspects of {\pmvb}. For {\pmvb} to work in practice, we do not need the aforementioned assumptions, and there is more freedom to exploit the probabilistic interpretation of binary predictions and the concentration inequalities. 
\section{Practical aspects of {\GPO}}\label{sec:prac}
Building on the intuitions from \Cref{sec:gpo}, this section explores practical considerations to further improve {\pmvb}. In practice, the output of the machine learning prediction model is a probability (or more precisely, a likelihood) $\tilde{y}_j(\xi) \in [0, 1]$. Given some threshold $\tau \in [0.5, 1]$ and defined the rounded solution:
\begin{equation}\label{eq:predict}
    \hat{y}_j(\xi) = \left\{ \begin{array}{ll}
       1  &  \text{if } \tilde{y}_j(\xi) \geq \tau, \\
       0  & \text{if } \tilde{y}_j(\xi) \leq 1 - \tau, \\
       \text{not rounded \; }  & \text{otherwise}.
    \end{array} \right.
\end{equation}
For any fixed data parameter $\xi$ and threshold $\tau$, we define the index sets of variables predicted at their lower and upper bounds as:
\begin{equation}\label{def:set:LU}
    \mathcal{L} (\tau; \xi) = \{ j : \tilde{y}_j (\xi) \leq 1 - \tau \} \quad  \text{and} \quad
\mathcal{U} (\tau; \xi) = \{ j : \tilde{y}_j (\xi) \geq \tau \}.
\end{equation} 
Using the definition, we define the  prediction accuracies, which are two random variables, as:
\begin{equation}\label{def:alpha:LU}
    \alpha_{\mathcal{L}}(\tau; \xi) = \tfrac{\sum_{j \in
   \mathcal{L} (\tau; \xi)} \mathbb{I} \{ y_j^{\star} (\xi) = 0 \}}{|
   \mathcal{L} (\tau; \xi)|} \quad  \text{and} \quad \alpha_{\mathcal{U}}(\tau; \xi) = \tfrac{\sum_{j \in
   \mathcal{U} (\tau; \xi)} \mathbb{I} \{ y_j^{\star} (\xi) = 1 \} }{|
   \mathcal{U} (\tau; \xi)|}.
\end{equation}
The quantities $\alpha_{\mathcal{L}}(\tau; \xi)$ and $\alpha_{\mathcal{U}}(\tau; \xi)$ quantify the prediction accuracy of the model $\tilde{y}$ with any threshold $\tau$ for any given instance $\xi$. Different choices of the hyperparameter $\tau$ can result in varying prediction accuracies, thereby influencing the performance of {\pmvb}. \textbf{Figure \ref{fig:predict}} illustrates the mean and variance of $\alpha_{\mathcal{L}}(\tau; \xi)$ and $ \alpha_{\mathcal{U}} (\tau; \xi)$ over $\xi \in \Xi$, as well as the average sizes of the index sets $\mathcal{L} (\tau; \xi)$ and $ 
\mathcal{U} (\tau; \xi)$, as   functions of the threshold parameter $\tau$. These statistics are computed using predictions $\{\tilde{y}_j(\xi)\}$ obtained from a GNN model, based on 100 \texttt{independent set} instances (see \textbf{Section \ref{sec:exp}} for more details).
\begin{figure*}[!h]
	\centering
    \includegraphics[scale=0.4]{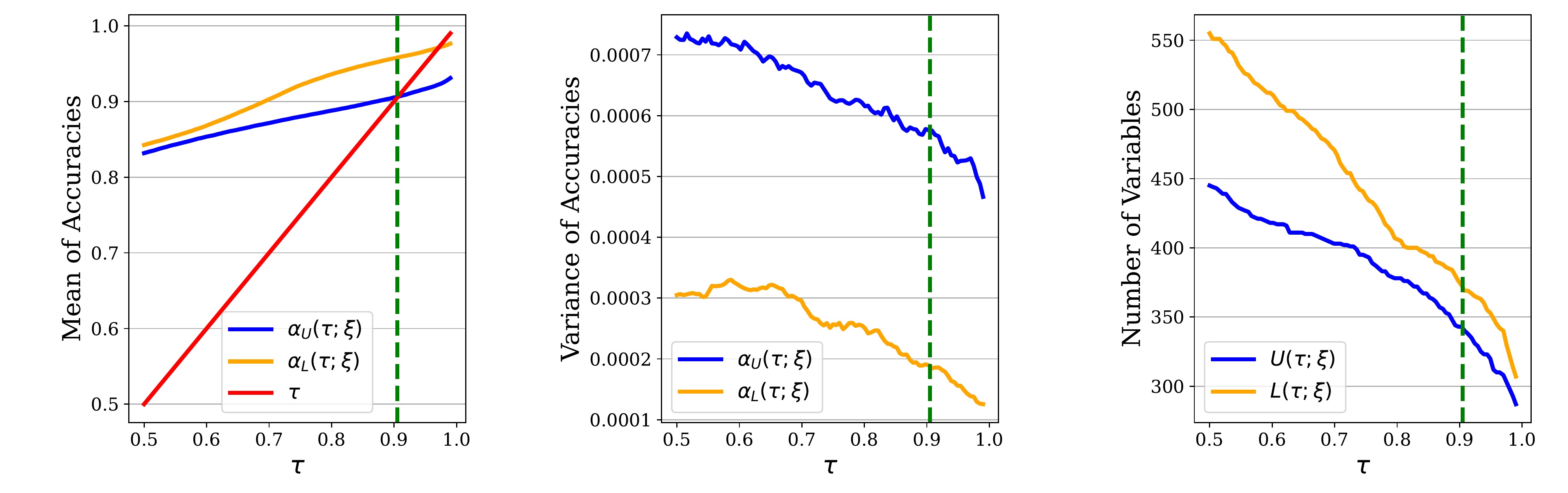}
	\caption{The mean and variance of  $\alpha_{\mathcal{L}}(\tau; \xi)$ and $ \alpha_{\mathcal{U}} (\tau; \xi)$, along with the average sizes of $\mathcal{L} (\tau; \xi)$ and $ 
\mathcal{U} (\tau; \xi)$, computed over 100 \texttt{independent set} instances for varying values of the threshold parameter $\tau$. \label{fig:predict}}
\end{figure*} 

As shown in \Cref{fig:predict}, there is a trade-off with respect to the choice of $\tau$. A larger $\tau$ tends to yield higher prediction accuracy and lower variance but includes fewer variables, potentially resulting in weaker hyperplanes. Conversely, a smaller $\tau$ includes more variables but leads to lower accuracy and higher variance. Thus, we recommend choosing the threshold $\tau$ according to:
\begin{equation}\label{ineq:tau}
    \tau^\star = \max_{\tau \in (0.5, 1]} \{ \tau : \mathbb{E} [\alpha_{\mathcal{L}}(\tau; \xi) ]  \geq \tau \text{ and } 
      \mathbb{E} [\alpha_{\mathcal{U}}(\tau; \xi)]  \geq \tau \}.
\end{equation}
The intuition behind the selection rule \eqref{ineq:tau} is to strike a balance between quantity (the number of variables included) and quality (the accuracy of predictions). Similar principles for balancing prediction quality and model complexity can be found in statistical model selection methods, such as Mallows's $C_p$ criterion \cite{gilmour1996interpretation}, $k$-fold cross validation \cite{bengio2004no}, and related techniques in reliability theory and extreme value theory. 

Let $\sigma^2$ denote an upper bound on the variance of both accuracies $\alpha_{\mathcal{L}}(\tau; \xi)$ and $\alpha_{\mathcal{U}} (\tau; \xi)$, that is,
\begin{equation}\label{ineq:bias}
      \mathbb{V} [\alpha_{\mathcal{L}}(\tau; \xi) ]  \leq \sigma^2 \quad \text{and} \quad 
      \mathbb{V} [\alpha_{\mathcal{U}}(\tau; \xi)]  \leq \sigma^2.
\end{equation}
By selecting the threshold $\tau$ according to the rule \eqref{ineq:tau} and applying Chebyshev's inequality (see \Cref{lem:chebyshev} in \Cref{appendix:lemma}) for any $\delta \in (0, 1)$, we obtain a probabilistic guarantee for prediction accuracy: 
\begin{equation}\label{ineq:chebyshev:alpha}
    \mathbb{P}[ |\alpha_{\mathcal{U}}(\tau; \xi) -  \mathbb{E} [\alpha_{\mathcal{U}}(\tau; \xi) ] |  \leq \tfrac{\sigma}{\sqrt{\delta}} ] \geq 1 - \delta.
\end{equation}
And a similar inequality holds for $\alpha_{\mathcal{L}}(\tau; \xi)$. Thus, combining \eqref{def:alpha:LU},  \eqref{ineq:tau} and \eqref{ineq:chebyshev:alpha} together, each of the following bound holds with probability at least $1 - \delta$:
$$
\textstyle \sum_{i \in \mathcal{U}(\tau^\star; \xi)}  y_i^{\star}(\xi) \geq
    \tau^\star | \mathcal{U}(\tau^\star; \xi) | - \tfrac{\sigma |\mathcal{U}(\tau^\star; \xi)|}{\sqrt{\delta}} , \quad \text{and} \quad \sum_{i \in \mathcal{L}(\tau^\star; \xi)}  y_i^{\star}(\xi) \leq
    (1 - \tau^\star) | \mathcal{L}(\tau^\star; \xi) | + \tfrac{\sigma |\mathcal{L}(\tau^\star; \xi)|}{\sqrt{\delta}}.
$$
Consider an example where the mean prediction accuracy is illustrated in \Cref{fig:predict}. Suppose we select the threshold $\tau = 0.9$, the confidence parameter $\delta = 0.05$, and estimate the variance bound $\sigma \approx 0.025$ according to \Cref{fig:predict}. If $\mathcal{U}(\tau; \xi) = \{1, 2, \dots, 100\}$, then the constraint becomes 
$$
\textstyle \sum_{i = 1}^{100} y_i^\star(\xi) \geq 90 - 100 \cdot \tfrac{0.025}{\sqrt{0.05}} \geq
     78.
$$
That is, with probability no less than 95\%, at least 78 out of the 100 variables take 1.
  
\subsection{Practical probabilistic multi-variable cardinality branching}

We now give a more practical
version of \textbf{Theorem \ref{thm:mvb}}. 
\begin{thm} \label{thm:pracmvb}
  Let $\mathcal{U}(\tau; \xi)$ and $\mathcal{L}(\tau; \xi)$ be defined as in \eqref{def:set:LU}, with $\tau$ chosen according to \eqref{ineq:tau}, and suppose $\sigma$ satisfies \eqref{ineq:bias}. Then, for any $\delta \in (0, 1)$, we have
  \begin{equation*}
      \begin{aligned}
 \Pbb\{\textstyle \sum_{i \in \mathcal{U}(\tau; \xi)}  y_i^{\star}(\xi) \geq 
    \tau | \mathcal{U(\tau; \xi)} | - \frac{\sigma |\mathcal{U}(\tau; \xi)|}{\sqrt{\delta}} \}\geq 1 - \delta, \\
\Pbb \{ \textstyle \sum_{i \in \mathcal{L}(\tau; \xi)}  y_i^{\star}(\xi)  \leq 
    (1 - \tau) | \mathcal{L(\tau; \xi)} | + \frac{\sigma |\mathcal{L}(\tau; \xi)|}{\sqrt{\delta}} \}\geq
     1- \delta.
\end{aligned}
  \end{equation*}
\end{thm}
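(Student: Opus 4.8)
The plan is to recognize that the two claimed tail bounds are, after dividing through by the pointwise-deterministic cardinalities $|\mathcal{U}(\tau;\xi)|$ and $|\mathcal{L}(\tau;\xi)|$, merely restatements of the already-established concentration inequality \eqref{ineq:chebyshev:alpha} for the accuracy random variables $\alpha_{\mathcal{U}}$ and $\alpha_{\mathcal{L}}$. Thus the whole argument reduces to two ingredients: an algebraic identity turning sums of optimal components into accuracies, and a combination of the Chebyshev bound with the mean lower bound that the threshold rule \eqref{ineq:tau} guarantees. In effect, the theorem formalizes the worked numerical estimate displayed just before its statement.

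First I would exploit the binarity of $y_j^{\star}(\xi)$. For each $j \in \mathcal{U}(\tau;\xi)$ we have $\mathbb{I}\{y_j^{\star}(\xi)=1\}=y_j^{\star}(\xi)$, so by \eqref{def:alpha:LU} the sum telescopes to $\sum_{j \in \mathcal{U}(\tau;\xi)} y_j^{\star}(\xi)=\alpha_{\mathcal{U}}(\tau;\xi)\,|\mathcal{U}(\tau;\xi)|$. Symmetrically, for $j \in \mathcal{L}(\tau;\xi)$ we have $\mathbb{I}\{y_j^{\star}(\xi)=0\}=1-y_j^{\star}(\xi)$, giving $\sum_{j \in \mathcal{L}(\tau;\xi)} y_j^{\star}(\xi)=(1-\alpha_{\mathcal{L}}(\tau;\xi))\,|\mathcal{L}(\tau;\xi)|$. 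Since for each realized $\xi$ the quantity $|\mathcal{U}(\tau;\xi)|$ is a fixed nonnegative integer, multiplying an inequality in $\alpha_{\mathcal{U}}$ by it is a pointwise operation that preserves the event. Hence the first claimed event is identical to $\{\alpha_{\mathcal{U}}(\tau;\xi)\geq \tau-\sigma/\sqrt{\delta}\}$, and the second, after rearranging the $(1-\alpha_{\mathcal{L}})$ factor, is identical to $\{\alpha_{\mathcal{L}}(\tau;\xi)\geq \tau-\sigma/\sqrt{\delta}\}$.

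It then remains to lower-bound the probabilities of these reduced events. Applying Chebyshev's inequality in the form \eqref{ineq:chebyshev:alpha} (see \Cref{lem:chebyshev}), valid under the variance bound \eqref{ineq:bias}, and keeping only its lower one-sided consequence yields $\mathbb{P}[\alpha_{\mathcal{U}}(\tau;\xi)\geq \mathbb{E}[\alpha_{\mathcal{U}}(\tau;\xi)]-\sigma/\sqrt{\delta}]\geq 1-\delta$, and likewise for $\alpha_{\mathcal{L}}$. Finally I would invoke the defining property of the threshold selected by \eqref{ineq:tau}, namely $\mathbb{E}[\alpha_{\mathcal{U}}(\tau;\xi)]\geq \tau$ and $\mathbb{E}[\alpha_{\mathcal{L}}(\tau;\xi)]\geq \tau$. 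Replacing $\mathbb{E}[\alpha_{\mathcal{U}}]$ by the smaller value $\tau$ only enlarges the event, so $\mathbb{P}[\alpha_{\mathcal{U}}(\tau;\xi)\geq \tau-\sigma/\sqrt{\delta}]\geq 1-\delta$, and the same for $\mathcal{L}$. Translating back through the identities of the previous paragraph gives both displayed bounds.

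The individual steps are routine; the only point requiring care is the bookkeeping of what is random. The probability is taken over the draw $\xi \sim \Xi$ (together with the training set), so $\alpha_{\mathcal{U}}$, $\alpha_{\mathcal{L}}$, and the set sizes are all random, whereas $\tau$ and $\sigma$ are fixed population quantities fixed by \eqref{ineq:tau} and \eqref{ineq:bias}. The main subtlety I would watch is ensuring the event-equivalence in the reduction remains valid when $\mathcal{U}(\tau;\xi)$ or $\mathcal{L}(\tau;\xi)$ is empty, where $\alpha$ and its normalization are undefined; I would dispatch this by restricting attention to realizations with nonempty index sets, on which the cardinality scaling is a genuine bijection of events, the empty case being vacuous for the displayed inequalities.
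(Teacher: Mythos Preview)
Your proposal is correct and follows exactly the approach the paper intends: the paper omits the proof, stating it ``follows directly from Chebyshev's inequality,'' having already laid out in the preceding paragraphs precisely the identities $\sum_{j\in\mathcal{U}}y_j^\star=\alpha_{\mathcal{U}}|\mathcal{U}|$, the Chebyshev bound \eqref{ineq:chebyshev:alpha}, and the use of \eqref{ineq:tau} to replace $\mathbb{E}[\alpha_{\mathcal{U}}]$ by $\tau$. Your write-up simply makes these steps explicit and handles the bookkeeping (including the empty-set caveat) carefully.
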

Since \Cref{thm:pracmvb} follows directly from Chebyshev's inequality, its proof is omitted. It is worth noting that, in contrast to \Cref{thm:mvb}, \Cref{thm:pracmvb} does not rely on any additional assumptions, making it more broadly applicable in practice. However, this generality comes at a cost: without those assumptions, the probability bound in \Cref{thm:pracmvb} no longer decays exponentially and is therefore more conservative.
\begin{rem}
    In practice, according to the definition of $\mathcal{U}(\tau; \xi)$ and $\mathcal{L}(\tau; \xi)$,
    $\sum_{i \in \mathcal{U}(\tau; \xi)} \tilde{y}_i(\xi) \geq \tau | \mathcal{U}(\tau; \xi) |$ and 
    $ \sum_{i \in \mathcal{L}(\tau; \xi)} \tilde{y}_i(\xi) \leq (1 - \tau) | \mathcal{L}(\tau; \xi) |$, 
    one may use the following two inequalities to get further tighter constraints: 
    \begin{equation*}
        \begin{aligned}
\mathcal{C}'_{\mathcal{U}} :\quad 
    &   \textstyle \sum_{i \in \mathcal{U}(\tau; \xi)} y_i(\xi) \geq \sum_{i \in \mathcal{U}(\tau; \xi)} \tilde{y}_i(\xi) 
     - \frac{\sigma |\mathcal{U}(\tau; \xi)|}{\sqrt{\delta}}, \\
\mathcal{C}'_{\mathcal{L}} :\quad 
    &   \textstyle \sum_{i \in \mathcal{L}(\tau; \xi)} y_i(\xi) \leq \sum_{i \in \mathcal{L}(\tau; \xi)} \tilde{y}_i(\xi) 
     + \frac{\sigma |\mathcal{L}(\tau; \xi)|}{\sqrt{\delta}}. 
    \end{aligned}
    \end{equation*}
\end{rem}

\subsection{Efficient pruning using objective cuts} \label{sec:prune}

When {\GPO} is applied to solve MIP exactly as a branching rule, it is necessary to solve all four
subproblems to certify optimality. However, prior knowledge that the optimal solution more likely lies in $\mathcal{P}_{\mathcal{C}_{\mathcal{U}},
\mathcal{C}_{\mathcal{L}}}$ can be utilized to accelerate the solution procedure. Denote by
$c^{\star} (\{ \mathcal{C} \})$ the optimal value of MIP $\mathcal{P}(\xi)$ given in \eqref{eq:prototype} with extra
constraints $\{ \mathcal{C} \}$. Suppose we solve $\mathcal{P}_{\mathcal{C}_{\mathcal{U}},
\mathcal{C}_{\mathcal{L}}}$ and get the optimal objective
$c^{\star}_{\mathcal{U}, \mathcal{L}} \assign c^{\star} (\{
\mathcal{C}_{\mathcal{U}}, \mathcal{C}_{\mathcal{L}}\})$. Then we add
constraint $c ( \x, \y ; \xi
  ) \leq c^{\star}_{\mathcal{U}, \mathcal{L}}$ to
$\mathcal{P}_{\mathcal{C}_{\bar{\mathcal{U}}}, \mathcal{C}_{\mathcal{L}}}$,
$\mathcal{P}_{\mathcal{C}_{\mathcal{U}}, \mathcal{C}_{\bar{\mathcal{L}}}}$ and
$\mathcal{P}_{\mathcal{C}_{\bar{\mathcal{U}}},
\mathcal{C}_{\bar{\mathcal{L}}}}$ and solve
\begin{equation*}
\begin{aligned}
  c^{\star}_{\bar{\mathcal{U}}, \mathcal{L}}  
    &= c^{\star} \big( \{ \mathcal{C}_{\bar{\mathcal{U}}}, \mathcal{C}_{\mathcal{L}}, 
    \; c ( \x, \y ; \xi
  ) \leq c^{\star}_{\mathcal{U}, \mathcal{L}} \} \big), \\[5pt]
  c^{\star}_{\mathcal{U}, \bar{\mathcal{L}}}  
    &= c^{\star} \big( \{ \mathcal{C}_{\mathcal{U}}, \mathcal{C}_{\bar{\mathcal{L}}}, 
    \; c ( \x, \y ; \xi
  ) \leq c^{\star}_{\mathcal{U}, \mathcal{L}} \} \big), \\[5pt]
  c^{\star}_{\bar{\mathcal{U}}, \bar{\mathcal{L}}}  
    &= c^{\star} \big( \{ \mathcal{C}_{\bar{\mathcal{U}}}, \mathcal{C}_{\bar{\mathcal{L}}}, 
    \; c ( \x, \y ; \xi
  ) \leq c^{\star}_{\mathcal{U}, \mathcal{L}} \} \big).
\end{aligned}
\end{equation*}

Once $\mathcal{P}_{\mathcal{C}_{\mathcal{U}}, \mathcal{C}_{\mathcal{L}}}$ excludes all the optimal solutions to MIP,
$c^{\star}_{\bar{\mathcal{U}}, \mathcal{L}} = c^{\star}_{\mathcal{U},
\bar{\mathcal{L}}} = c^{\star}_{\bar{\mathcal{U}}, \bar{\mathcal{L}}} = +
\infty$ and other branches can be pruned efficiently, often at the cost of solving three relaxations problems.

\subsection{Compatibility with existing learning models}

As discussed in Remark \ref{rem:mvb}, {\GPO} is highly flexible in implementation. Once a predicted solution $\{\tilde{y}_j\}$ is available, {\GPO} can be applied with minimal additional effort. Its generality allows it to incorporate a wide range of learning models—such as those listed in \Cref{table_first}. In particular, in \Cref{sec:data-free}, we demonstrate the use of a data-free classifier based on the LP relaxation.

\section{Data-free {\GPO} using LP relaxation}\label{sec:data-free}

Despite being a data-driven method, the idea behind {\GPO} is applicable so
long as a likelihood estimate of the binaries is available. In the MIP context, a
natural surrogate is the optimal solution to the root LP relaxation. Denote
$\mathbf{y}^{\star}_{\tmop{Root}}$ to be the MIP root relaxation solution
after root cut rounds, the idea is to construct hyperplanes based on
$\mathbf{y}^{\star}_{\tmop{Root}}$ instead. In other words, we view ``LP
solving'' itself as a prediction algorithm. We justify the idea by analyzing the theoretical properties of data-free {\pmvb} applied to a toy knapsack problem. 

\subsection{Analysis of data-free {\GPO}}

 We consider the following knapsack problem:
\begin{equation}\label{prob:knapsack}
    \begin{aligned}
        \max_{\y \in \{0, 1\}^n} \quad & c_1 y_1 + \cdots + c_n y_n \\
        \text{subject to} \quad & a_1 y_1 + \cdots + a_n y_n \leq b,
    \end{aligned}
\end{equation}
where $a_i, c_i \geq 0$ and $b >0$ are some given constants. We make the following assumptions:
\begin{enumerate}[start=5,label=\textbf{A\arabic*:},ref=\rm{\textbf{A\arabic*}}]
\item (Uniform cost) Parameters $\{a_i\}$ are i.i.d. uniformly distributed in $(0,1)$, i.e., $a_i \sim \mathcal{U}[0, 1]$. \label{A7}
\item (Uniform ratio) The weight $c_i = f_i \cdot a_i$ where $\{f_i\}$ are i.i.d. uniformly distributed in $[0,1]$, i.e., $f_i \sim \mathcal{U}[0, 1]$. \label{A8}
\end{enumerate}
Similar assumptions have also been made in \cite{lueker1982average} to analyze the average difference of objective value between the 0/1 knapsack problem and its linear relaxation. There are many other works on both the average and worst-case performance of LP relaxation for 0/1 knapsack problems, such as \cite{kohli2004average, morales2020expected}.
\begin{thm} \label{thm:data-free}
Let $\y^\star$ and $\tilde{\y}$ be the optimal solution to the knapsack problem \eqref{prob:knapsack} and its the LP relaxation, respectively. 
Under \ref{A7} and \ref{A8} and assuming $b = \gamma n$ for some constant $\gamma \in (0, \frac{1}{2})$, let
  $\mathcal{U} \assign \{ i : \tilde{y}_i = 1 \}$ and $\mathcal{L}= \{ i :
  \tilde{y}_i = 0 \}$. Then with probability at least $1 -\lceil 2\sqrt{n} \rceil \exp(\frac{-\sqrt{n}}{8})$, each of the inequalities
  below holds.
    \begin{equation*}
\begin{aligned}
 \mathcal{C}_{\mathcal{U}} :&     \textstyle \sum_{i \in \mathcal{U}}  y_i^{\star} \geq
    \sum_{i \in \mathcal{U}} \tilde{y}_i - 4\sqrt{2}n^{\frac{3}{4}} =  | \mathcal{U} | - 4\sqrt{2}n^{\frac{3}{4}}\\
    \mathcal{C}_{\mathcal{L}}: &    \textstyle \sum_{i \in \mathcal{L}}  y_i^{\star} \leq
    \sum_{i \in \mathcal{L}} \tilde{y}_i + 4\sqrt{2}n^{\frac{3}{4}} = 4\sqrt{2}n^{\frac{3}{4}}   
\end{aligned}
    \end{equation*}
\end{thm}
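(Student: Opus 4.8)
The plan is to exploit the explicit structure of the continuous knapsack optimum, reduce the two target inequalities to bounding the size of the disagreement set between $\y^\star$ and $\tilde\y$, and then control that size by combining an optimality (exchange) argument with concentration under \ref{A7}--\ref{A8}. Since $c_i/a_i = f_i$, the greedy critical-ratio rule solves the relaxation of \eqref{prob:knapsack}: sorting items by the density $f_i$, there is a threshold (dual multiplier) $\lambda$ with $\tilde y_i = 1$ for $f_i > \lambda$ and $\tilde y_i = 0$ for $f_i < \lambda$, plus a single fractional critical item at $f_i = \lambda$. Hence $\mathcal U = \{i : f_i > \lambda\}$ and $\mathcal L = \{i : f_i < \lambda\}$ up to that one item, and the two claimed bounds are exactly $|D| \le 4\sqrt2\,n^{3/4}$ and $|A| \le 4\sqrt2\,n^{3/4}$, where $D := \{i \in \mathcal U : y_i^\star = 0\}$ is the set dropped by the integer optimum and $A := \{i \in \mathcal L : y_i^\star = 1\}$ is the set added.

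Next I would derive a deterministic bound on the density-weighted size of the disagreement. Taking all of $\mathcal U$ is feasible because its weight $\sum_{i\in\mathcal U} a_i \le b$, so optimality of $\y^\star$ forces $\sum_{i\in A} c_i \ge \sum_{i\in D} c_i$; and since the LP saturates the budget while the integer solution cannot exceed it, a weight-balance argument gives $\sum_{i\in A} a_i \le \sum_{i\in D} a_i + 1$ (the single fractional item contributes only an $O(1)$ correction, absorbed here). Writing $c_i = f_i a_i$ and combining these two facts yields the key flip-weight inequality
\[
\sum_{i \in D}(f_i - \lambda)a_i + \sum_{i\in A}(\lambda - f_i)a_i = -\Big(\sum_{i\in A}c_i - \sum_{i\in D}c_i\Big) + \lambda\Big(\sum_{i\in A}a_i - \sum_{i\in D}a_i\Big) \le \lambda \le 1,
\]
so that $\sum_{i\in D\cup A}|f_i-\lambda|\,a_i \le 1$, noting every summand is nonnegative by the definitions of $D$ and $A$.

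The final step converts this weighted bound into a cardinality bound, and here the uniform-distribution assumptions enter. I would fix a window half-width $\epsilon = n^{-1/4}$ and split each flipped index into three groups: (i) near-threshold flips with $|f_i-\lambda|\le\epsilon$; (ii) far flips with $|f_i-\lambda|>\epsilon$ and weight $a_i > n^{-1/2}$; and (iii) far flips with $a_i \le n^{-1/2}$. Group (ii) is handled deterministically: each such term exceeds $\epsilon\cdot n^{-1/2}=n^{-3/4}$, so the flip-weight bound forces at most $n^{3/4}$ of them. Groups (i) and (iii) are handled by concentration: since $f_i,a_i$ are i.i.d.\ uniform, the number of items in a density window of width $2\epsilon$ has mean $\approx 2n^{3/4}$ and the number of light items $\{a_i\le n^{-1/2}\}$ has mean $\approx \sqrt n$, each of which concentrates by Hoeffding's inequality. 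Summing the three contributions gives $|D|,|A| = O(n^{3/4})$ with the stated constant $4\sqrt2$ per side.

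The main obstacle is that the threshold $\lambda$ is itself a function of all the data, so I cannot apply Hoeffding directly to ``the items within $\epsilon$ of $\lambda$'' as though the window were fixed. I would resolve this by first localizing $\lambda$: a concentration bound on the prefix weights $\sum_{j\le k} a_{(j)}$ (i.i.d.\ uniform in the $f$-sorted order, with mean $k/2$) shows the LP critical index lies in $2\gamma n \pm O(\sqrt n)$, so $\lambda$ is pinned down to an interval covered by $O(\sqrt n)$ candidate positions. Taking a grid of $\lceil 2\sqrt n\rceil$ candidate thresholds and union-bounding the window-count concentration over this grid is precisely what produces the factor $\lceil 2\sqrt n\rceil$, while each individual Hoeffding estimate at deviation level $\sim n^{3/4}$ supplies the $\exp(-\sqrt n/8)$ term, giving the overall failure probability $\lceil 2\sqrt n\rceil\exp(-\sqrt n/8)$. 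Calibrating the grid spacing and the Hoeffding deviation so that the three group contributions sum to the clean constant $4\sqrt2$ is the part demanding the most careful bookkeeping.
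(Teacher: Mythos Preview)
Your plan is sound and shares the paper's skeleton: the greedy threshold description of the LP optimum, the exchange inequalities from optimality and feasibility, and a union bound to cope with the data-dependent threshold. Your flip-weight inequality $\sum_{i\in D\cup A}|f_i-\lambda|\,a_i\le 1$ is in fact a tidy unification of the two relations the paper writes out separately. The route diverges at the cardinality step. The paper does not split the flipped indices by both $|f_i-\lambda|$ and $a_i$; instead it takes a \emph{narrow} $f$-window of width $\delta=\tfrac{1}{2\sqrt n}$ and first bounds the \emph{weight} $\sum_{i\in D}a_i\le \tfrac{1}{\delta}+4n\delta=4\sqrt n$, then converts this to $|D|\le 4\sqrt2\,n^{3/4}$ by a second concentration claim on the $a_i$'s (any subset $S$ with $|S|\ge 4n\delta$ must carry weight at least $|S|^2/(8n)$). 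Your three-group split with $\epsilon=n^{-1/4}$ reaches the same order in one pass; the paper's two-step path has the advantage that the constant $4\sqrt2$ drops out mechanically from $|D|^2\le 8n\cdot 4\sqrt n$, whereas in your scheme it must be assembled from three contributions. On the random-threshold issue, the paper's device is simpler than your localization of the critical index: it merely partitions $[0,1]$ into $\lceil 2\sqrt n\rceil$ bins of width $\delta$ and shows (Bernstein plus union bound) that \emph{every} bin contains at most $2n\delta$ of the $f_i$'s---this is precisely the origin of the prefactor $\lceil 2\sqrt n\rceil$ and the exponent $-\sqrt n/8$. Whichever bin the random $f_{k+1}$ falls in, the window $[f_{k+1}-\delta,f_{k+1}]$ is covered by two adjacent bins, so no prior pinning down of $\lambda$ is needed.
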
 
\begin{proof}
The LP relaxation of \eqref{prob:knapsack} is given by
\begin{equation}\label{prob:knapsack-relaxation}
    \begin{aligned}
        \max_{\y \in \mathbb{R}^n} \quad & c_1 y_1 + \cdots + c_n y_n \\
        \text{subject to} \quad & a_1 y_1 + \cdots + a_n y_n \leq b, \\
        & 0 \leq y_i \leq 1, \, \forall i \in [n].
    \end{aligned}
\end{equation}
 We could write down its Lagrangian function as 
    $$
    L(\y, \lambda, \mathbf{\eta}, \mathbf{\xi}) = -(c_1 y_1 + \cdots + c_n y_n) + \lambda(a_1 y_1 + \cdots + a_n y_n - b) + \textstyle \sum_{i=1}^n \eta_i(y_i - 1) -\textstyle \sum_{i=1}^n \xi_i y_i
    $$
    with $\lambda \geq 0$, $\mathbf{\eta} \geq 0$ and $\mathbf{\xi} \geq 0$. Denote by $\tilde{\mathbf{y}}$ and $(\lambda^\star, \mathbf{\eta}^\star, \mathbf{\xi}^\star)$ the primal and dual optimal solutions to \eqref{prob:knapsack-relaxation}, which should satisfy the following KKT optimality conditions:
    \begin{equation*}
    \begin{aligned}
        a_i(f_i -  \lambda^\star) & = \eta^\star_i - \xi^\star_i, \, \forall i \\
        \eta^\star_i (y_i - 1) & = 0,  \, \forall i \\
        \xi^\star_i  y_i & = 0,  \, \forall i.
        \end{aligned}
    \end{equation*}
    Then, it is easy to see that the optimal solution $\tilde{\y}$ is given by
\begin{equation}\label{eq:opt:y}
    \tilde{y}_i \in 
    \begin{cases}
        \{1\} & \text{if } f_i > \lambda^\star, \\
        \{0\} & \text{if } f_i < \lambda^\star, \\
        [0, 1] & \text{otherwise}.
    \end{cases}
\end{equation}
    Without loss of generality, we assume $f_1 > f_2 > \cdots > f_n$. Recall the definition $\mathcal{U} \assign \{ i : \tilde{y}_i = 1 \}$ and $\mathcal{L}= \{ i :
  \tilde{y}_i = 0 \}$. Then, \eqref{eq:opt:y} implies that there exists some constant $k \in [n]$ such that
    \begin{equation*}
        \mathcal{U} = \{1, 2, \dots, k\} \quad \text{and} \quad \mathcal{L} = \{k + 2, \dots, n\}.
    \end{equation*}
    Here, we assume $\tilde{y}_{k+1}\in (0, 1)$. Otherwise, $\tilde{\y}$ is already the optimal solution to the original knapsack problem \eqref{prob:knapsack}, and the result trivially holds. Let $\mathcal{U}^\star \assign \{ i : y^\star_i = 1 \}$ and $\mathcal{L}^\star= \{ i : y^\star_i = 0 \}$. According to the optimality condition of $\y^\star$, we have
    \begin{equation*}
    \begin{aligned}
        c_1 y_1^\star + \cdots + c_n y_n^\star &= \textstyle \sum_{i \in \mathcal{U}^\star} c_i \geq \textstyle \sum_{i \in \mathcal{U}} c_i\\
         a_1 y_1^\star + \cdots + a_n y_n^\star &= \textstyle \sum_{i \in \mathcal{U}^\star} a_i \leq b \leq \textstyle \sum_{i \in \mathcal{U}} a_i + 1,
         \end{aligned}
    \end{equation*}
    where the last inequality holds since $b = a_1 \tilde{y}_1 + \cdots + a_n \tilde{y}_n = \textstyle \sum_{i \in \mathcal{U}} a_i + a_{k+1} \tilde{y}_{k+1} \leq \textstyle \sum_{i \in \mathcal{U}} a_i + 1$. In other words,
    \begin{equation}\label{ineq:ciai}
        \textstyle \sum_{i \in \mathcal{U}^\star \setminus \mathcal{U}} c_i \geq \textstyle \sum_{i \in \mathcal{U} \setminus \mathcal{U}^\star} c_i \quad \text{and} \quad \textstyle \sum_{i \in \mathcal{U}^\star \setminus \mathcal{U}} a_i \leq 1 + \textstyle \sum_{i \in \mathcal{U} \setminus \mathcal{U}^\star} a_i 
    \end{equation}
    Since $ \mathcal{U} = \{1, 2, \dots, k\}$, we know that $\mathcal{U}^\star \setminus \mathcal{U} \subseteq \{k+1, \cdots, n\}$. A simple computation shows  
    \begin{equation}\label{ineq:ci}
    \begin{aligned}
        \textstyle \sum_{i \in \mathcal{U}^\star \setminus \mathcal{U}} c_i = \textstyle \sum_{i \in \mathcal{U}^\star \setminus \mathcal{U}} f_i \cdot a_i & \leq (f_{k+1} - \delta) \textstyle \sum_{i \in \mathcal{U}^\star \setminus \mathcal{U}} a_i + \delta \cdot |\{i: f_i \in [f_{k+1} - \delta, f_{k+1}] \}|.
    \end{aligned}
    \end{equation}
    Note that for any given $\delta$, we know that $[f_{k+1} - \delta, f_{k+1}] \subset [(j-1)\delta, (j+1)\delta]$ for some $1 \leq j \leq 1/\delta$. Then, by applying  \Cref{lem:concentrate} with $2\delta$, we know that 
    \begin{equation}\label{ineq:set:gap}
         |\{i: f_i \in [f_{k+1} - \delta, f_{k+1}] \}| \leq 4n\delta
    \end{equation}
    holds with probability at least $1 -\lceil \frac{1}{\delta} \rceil \exp(\frac{-n\delta}{2})$. Combining \eqref{ineq:ciai}, \eqref{ineq:ci} and \eqref{ineq:set:gap} together, we get
    \begin{equation}
        \textstyle \sum_{i \in \mathcal{U}^\star \setminus \mathcal{U}} c_i \leq (f_{k+1} - \delta) \textstyle \sum_{i \in \mathcal{U}^\star \setminus \mathcal{U}} a_i + 4\delta^2 n \leq (f_{k+1} - \delta) (1 + \textstyle \sum_{i \in \mathcal{U} \setminus \mathcal{U}^\star} a_i) + 4\delta^2 n.
    \end{equation}
    Next, the fact that $\textstyle \sum_{i \in \mathcal{U}^\star \setminus \mathcal{U}} c_i \geq \textstyle \sum_{i \in \mathcal{U} \setminus \mathcal{U}^\star} c_i \geq f_k \cdot\textstyle \sum_{i \in \mathcal{U} \setminus \mathcal{U}^\star} a_i$ implies
    $$
    f_k \cdot\textstyle \sum_{i \in \mathcal{U} \setminus \mathcal{U}^\star} a_i \leq (f_{k+1} - \delta) (1 + \textstyle \sum_{i \in \mathcal{U} \setminus \mathcal{U}^\star} a_i ) + 4\delta^2 n \leq 1 + (f_{k} - \delta) \textstyle \sum_{i \in \mathcal{U} \setminus \mathcal{U}^\star} a_i + 4\delta^2 n.
    $$
    A simple computation shows that
    $$
    \textstyle \sum_{i \in \mathcal{U} \setminus \mathcal{U}^\star} a_i \leq \frac{1}{\delta} + 4\delta n = 4\sqrt{n}
    $$
    where the last inequality holds by choosing $\delta = \frac{1}{2\sqrt{n}}$. Lastly, by applying \Cref{lem:concentrate} with $\delta = \frac{1}{2\sqrt{n}}$ and $S = \mathcal{U} \setminus \mathcal{U}^\star$, we get $|\mathcal{U} \setminus \mathcal{U}^\star| \leq 4\sqrt{2}n^{\frac{3}{4}}$. 
    Thus, we complete the proof by
    $$
    \textstyle \sum_{i \in \mathcal{U}}  y_i^{\star} \geq \textstyle \sum_{i \in \mathcal{U} \cap \mathcal{U}^\star}  y_i^{\star}  = \textstyle \sum_{i \in \mathcal{U} \cap \mathcal{U}^\star}  \tilde{y}_i = \textstyle \sum_{i \in \mathcal{U}} \tilde{y}_i - |\mathcal{U} \setminus \mathcal{U}^\star| \geq
    \textstyle \sum_{i \in \mathcal{U}} \tilde{y}_i - 4\sqrt{2}n^{\frac{3}{4}}
    $$
    where the first equality holds since $y_i^{\star}  =  \tilde{y}_i = 1$ for all $i \in \mathcal{U} \cap \mathcal{U}^\star$. Another inequality w.r.t. $\mathcal{L}$ trivally holds since $\mathcal{L}^\star$ and $\mathcal{L}$ are the complementary set of $\mathcal{U}^\star$ and $\mathcal{U}$ (except for one element), respectively.
\end{proof}

\Cref{thm:data-free} shows that, for the standard 0/1-knapsack problem and under the uniform distribution setting, the similar generated hyperplane holds with high probability. Note that it is easy to see that $|\mathcal{U}| \geq b = \gamma n$, so that $4\sqrt{2}n^{\frac{3}{4}} = o(|\mathcal{U}|)$ is a small term with respect to the size of $\mathcal{U}$. 

\subsection{Using interior point solution in {\pmvb}} 

In practice, it is observed that the branching hyperplanes generated by the
interior point solution are often more effective than using the simplex
method. Simplex method always yields a vertex solution where most binary
variables take exactly 0/1, while interior point method gives fractional
values more accurately reflecting the likelihood of the variables taking 0/1.
Similar advantages of the interior point method have also been observed in the
literature {\cite{berthold2018four}}.
\section{Numerical experiment}\label{sec:exp}

In this section, we present experiments to evaluate the practical performance of different variants of \tmverbatim{\pmvb}, available on Github\footnote{The code is available on \href{https://github.com/zwyhahaha/GNN_MVB}{https://github.com/zwyhahaha/GNN\_MVB}}. For the data-driven experiments, we use two commercial solvers: \tmverbatim{Gurobi} \cite{gurobi2021gurobi} and \tmverbatim{COPT} \cite{ge2022cardinal}. Due to license restrictions, only \tmverbatim{COPT} is used for the MIPLIB experiments. All experiments are conducted on a machine with an Intel(R) Xeon(R) CPU E5-2680 @ 2.70GHz and 64GB of memory. 

\subsection{Experiment setup}
\paragraph{Learning model.} We adopt two types of learning models: simple logistic regression and graph neural network. Logistic regression is applied to tasks where only part of the coefficient in MIP changes, and it has a low VC dimension. We use a separate regression model for each binary variable and train $\hat{y}_i$ over the whole training set. The probabilistic output is used for
generating branching hyperplanes.  GNN is used for more complex scenarios where the problem size and data coefficient change. We reuse the GNN models from
{\cite{canturk2024scalable}} trained on 1000 instances and validated on 200
instances from each class.

\paragraph{MIP data.} For logistic regression, we use two combinatorial problems \textbf{1)}. Multi-knapsack problem
(\tmverbatim{MKP}). \textbf{2)}. Set covering problem (\tmverbatim{SCP}) and
a real-life problem \textbf{3)}. Security-constrained unit commitment
(\tmverbatim{SCUC}) from \tmverbatim{IEEE 118} as our testing models:
\begin{itemize}[leftmargin=*]
	\item For \tmverbatim{MKP} models $\max_{\mathbf{Ay} \leq \mathbf{b}, \mathbf{y}
\in \{ 0, 1 \}^n} \langle \mathbf{c}, \mathbf{y} \rangle$, $\mathbf{A} \in
\mathbb{R}^{m \times n}$,  we generate synthetic datasets according to the problem setup
in {\cite{chu1998genetic}}. After choosing problem sizes $(m, n) \in \{ (10,
250), (10, 500), (30, 250), (30, 500) \}$, each element of $a_{i j}$ is
uniformly drawn from $\{ 1, \ldots, 1000 \}$ and $c_i = \frac{1}{m} \sum_i
a_{i j} + \delta_i$ where $\delta_i$ is sampled from $\{ 1, \ldots, 500 \}$.
Then for each setup we fix $\mathbf{A}, \mathbf{c}$ and generate $500$ i.i.d.
instances with $b_i \sim \mathcal{U} [ 0.8 \cdot \frac{1}{4 n}\sum_j a_{i
j}, 1.2 \cdot \frac{1}{4 n} \sum_j a_{i j}]$ for training. We generate 20
new instances for testing. In \tmverbatim{MKP} $\xi = \mathbf{b}$.
\item For \tmverbatim{SCP} models $\min_{\mathbf{Ay} \geq \textbf{1}, \mathbf{y} \in
\{ 0, 1 \}^n} \langle \mathbf{c}, \mathbf{y} \rangle, \mathbf{A} \in
\mathbb{R}^{m \times n}$, we use dataset from {\cite{umetani2017exploiting}}.
After choosing problem sizes $(m, n) \in \{ (1000, 10000), (1000, 20000),
(2000, 20000) \}$ and generating $\mathbf{A}$ with sparsity 0.01, $\bar{c}_i
\in \{ 1, \ldots, 100 \}$, for each setup we fix $\mathbf{A}$ and generate
$500$ i.i.d. instances with $c_i \sim \mathcal{U} [0.8 \cdot \bar{c}_i, 1.2
\cdot \bar{c}_i]$ for training. We generate 20 new instances for testing. In
\tmverbatim{SCP} $\xi = \mathbf{c}$.

\item For \tmverbatim{SCUC} models $\min_{(\mathbf{x}, \mathbf{y}), h (\mathbf{x},
\mathbf{y}) \leq \textbf{0}, \mathbf{Dx} = \mathbf{d}} c (\mathbf{x},
\mathbf{y})$, we use \tmverbatim{IEEE 118} dataset
{\cite{birchfield2016grid,xu2017application}} of time horizon 96, fix all the
information of generators and simulate load demand vector $\mathbf{d} \in
\mathcal{U} [l, u]^{96}, (l, u) \in \{(3000, 5000), (4000, 6000)\}$. For each setup we generate 200 i.i.d. instances for training and
5 new instances for testing. In \tmverbatim{SCUC} $\xi = \mathbf{d}$.
\end{itemize}

For GNN, we use types of combinatorial problems \textbf{1)}
set covering (\tmverbatim{SCP}) \textbf{2)} combinatorial auction (\tmverbatim{CA}) \textbf{3)}
independent set (\tmverbatim{IS}). These three datasets are taken from {\cite{gasse2019exact}}. Aside from the datasets above, we also use MIPLIB 2017 collection
{\cite{gleixner2021miplib}} to test data-free {\pmvb}.   We take 132 instances from the MIPLIB benchmark set {\cite{gleixner2021miplib}}.

\paragraph{Training data.}
In logistic regression tasks, we build training data by ourselves: each
\tmverbatim{MKP} is given 300 seconds and solved to a $10^{- 4}$ relative gap;
Each \tmverbatim{SCP} of size $(m, n) = (1000, 10000)$ is given 300 seconds and
solved to $10^{- 3}$ gap; Each \tmverbatim{SCP} of size $(m, n) \in \{ (1000,
20000), (2000, 20000) \}$ is given 600 seconds and solved to a $10^{- 1}$ gap;
Each \tmverbatim{SCUC} is given 3600 seconds and solved to a $10^{- 4}$ gap. For GNN, 1000 instances are
generated according to the setup in
{\cite{canturk2024scalable,gasse2019exact,gleixner2021miplib}}. The labels
used for training are generated using \tmverbatim{CPLEX}.

\subsection{{\pmvb} as a primal heuristic}

In this section, we benchmark the performance of {\GPO} as a primal heuristic. We choose two current leading MIP solvers \tmverbatim{Gurobi} and \tmverbatim{COPT} from MIP benchmark {\cite{mittelmann2020benchmarking}} and use them to evaluate performance improvement using {\GPO} to improve primal upper bound. 

Given a
new MIP instance from our test set, we construct the two hyperplanes according
to \textbf{Theorem \ref{thm:pracmvb}} and use hyperparameters $\delta = 0.8, \tau=0.9$. After
generating the hyperplanes, we allow 120 seconds for the solver to solve
$\mathcal{P}_{\mathcal{C}_{\mathcal{L}}, \mathcal{C}_{\mathcal{U}}}$ and
obtain the time $T_{\text{\tmverbatim{\pmvb}}}$ it spends achieving the best
objective value $F_{\text{\tmverbatim{\pmvb}}}$. Then we run the solver again, with a time limit of 3600 seconds, on the MIP instance without adding hyperplanes and record the time
$T_{\text{\tmverbatim{Original}}}$ for the solver to reach
$F_{\text{\tmverbatim{\pmvb}}}$. We take the shifted geometric mean (SGM) \cite{mittelmann2020benchmarking} over testing instances and compute
\begin{equation*}
    \begin{aligned}
\textstyle \bar{T}_{\text{\tmverbatim{\pmvb}}} &:=  \exp({ \textstyle \frac{1}{n}\sum_{i=1}^n} \log \max\{1, T^i_{\text{\tmverbatim{\pmvb}}} + 10\}) - 10,\\
\textstyle \bar{T}_{\text{\tmverbatim{Original}}} &:=  \exp({ \textstyle \frac{1}{n}\sum_{i=1}^n} \log \max\{1, T^i_{\text{\tmverbatim{Original}}} + 10\}) - 10.	
\end{aligned}
\end{equation*}

We report the average speedup \(\Sigma \assign \frac{\bar{T}_{\text{\tmverbatim{Original}}}}{\bar{T}_{\text{\tmverbatim{\pmvb}}}}\) as the primary performance evaluation metric. To assess how our method interacts with the built-in heuristics of MIP solvers, we vary the solvers' heuristic-related parameters across different levels. Specifically, we use \texttt{L}, \texttt{M}, and \texttt{H} to denote low, medium, and high heuristic settings, respectively. For \tmverbatim{Gurobi}, these correspond to \texttt{Heuristics} values \(\{0, 0.05, 1.0\}\); for \tmverbatim{COPT}, they correspond to \texttt{HeurLevel} values \(\{0, -1, 3\}\). Here, \texttt{L} disables heuristics, \texttt{M} uses the default setting, and \texttt{H} enables heuristics aggressively.\\
 
\begin{figure*}[h!]
\centering
\includegraphics[scale=0.25]{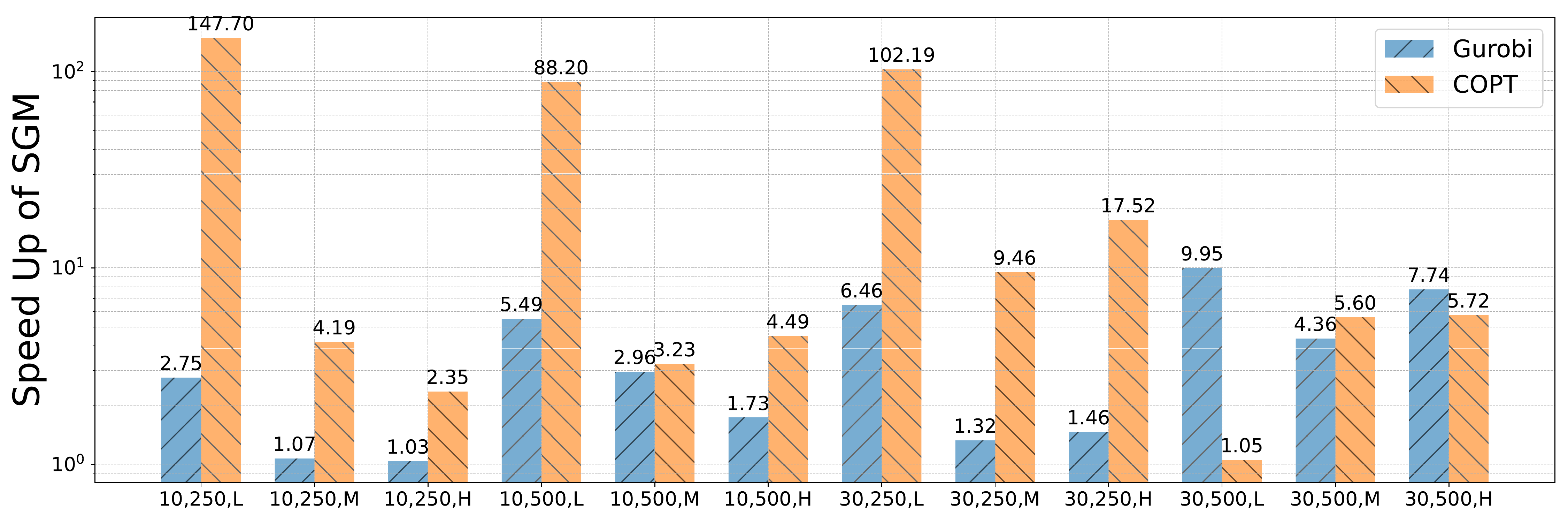}	
\includegraphics[scale=0.25]{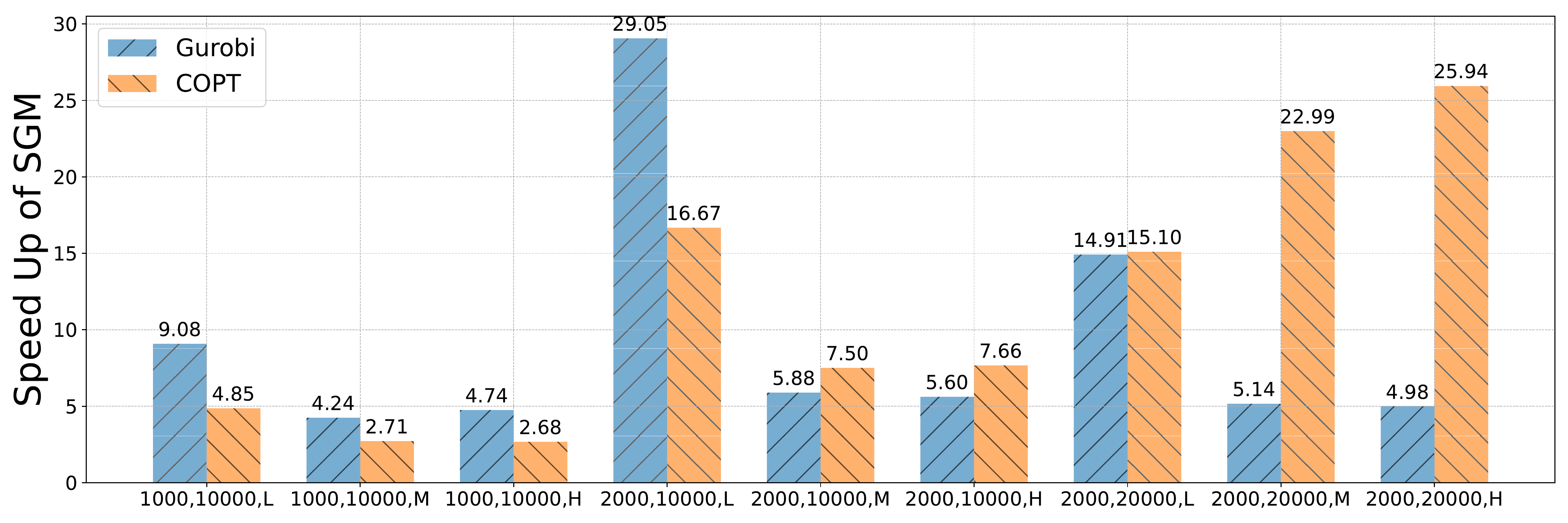}
\includegraphics[scale=0.25]{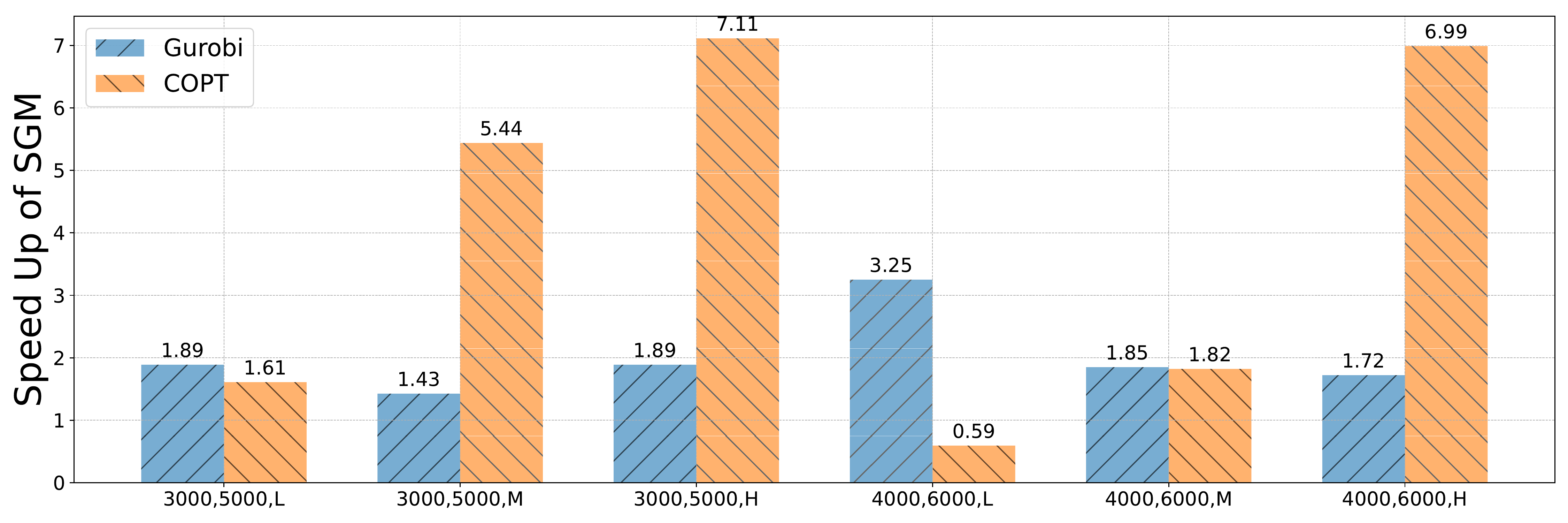}	
\caption{Experiments using logistic regression model. From top to bottom: speedup on \texttt{MKP}, \texttt{SCP} and \texttt{SCUC} instances. Left: speedup of \texttt{Gurobi}, Right: speedup of \texttt{COPT}. Each tuple in the x-axis represents $(m, n, \text{Heuristic})$ and y-axis denotes average speedup $\Sigma$ in the corresponding settings. \label{fig:mvbheur}}
\end{figure*}

\begin{figure}[h!]
\centering
\includegraphics[scale=0.25]{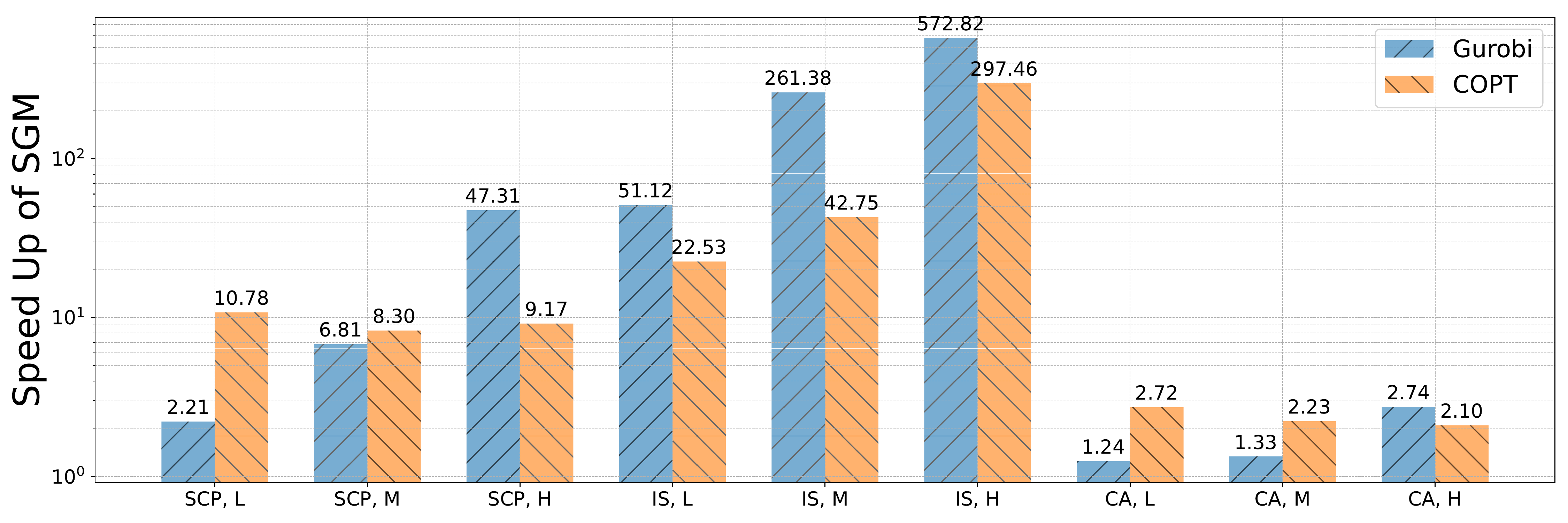}
\caption{Experiments using GNN on \texttt{SCP}, \texttt{IS}, and \texttt{CA}}
\label{fig:mvbheur-gnn}
\end{figure}

As shown in our experiments \Cref{fig:mvbheur}, when the problem structure remains fixed, even a simple logistic regression model enables {\pmvb} to deliver substantial speedups on two commercial solvers across both synthetic datasets and real-world problems. For \texttt{MKP} instances, we observe speedups of at least 500\% under several configurations. On \texttt{SCUC} problems, the method achieves an average speedup of over 50\%. Notably, for \texttt{SCP} instances, we observe consistent speedups across all test cases, with most cases exceeding 1000\%. These results highlight the practical effectiveness and robustness of our method.\\

When the problem structure is allowed to vary, \Cref{fig:mvbheur-gnn} demonstrates that the GNN model is capable of capturing structural patterns in the problem instances. However, the interaction between {\pmvb} and the solvers’ internal heuristics becomes less predictable across different datasets. For example, in the case of \texttt{SCP}, we observe a clear trend of diminishing speedup for both solvers as the problem becomes relatively easy (i.e., with small $(m,n)$). Conversely, as the problem size increases (e.g., $(m,n)=(2000,20000)$), the performance benefits of {\pmvb} become more evident.\\

Another notable observation is the solver-specific performance variation introduced by {\pmvb}. While \texttt{COPT} generally benefits more than \texttt{Gurobi} in the logistic regression setting \Cref{fig:mvbheur}, the trend reverses when GNN is used for prediction \Cref{fig:mvbheur-gnn}, where \texttt{Gurobi} sees greater improvements.

\subsection{{\pmvb} as a branching rule}

In this section, we evaluate {\GPO} as an external branching rule integrated into \texttt{Gurobi}. We implement the objective cut pruning strategy described in \Cref{sec:prune} and test the method on the following datasets:
\begin{itemize}
	\item 40 \texttt{MKP} instances using logistic regression;
\item 200 \texttt{SCP} instances and \texttt{CA} instances using GNN.
\end{itemize}
All experiments are conducted using \texttt{Gurobi} with 4 threads and a time limit of 3600 seconds. All other solver parameters are kept at their default settings. The results are summarized in \Cref{table:branching-rule}.\\

\begin{table}[h!]
\centering
\caption{Comparison of original SGM and MVB times with relative speedup. \label{table:branching-rule}}
\begin{tabular}{cccc}
\toprule
Instance & {Original time} & {\pmvb} time & Speedup \\
\midrule
\texttt{SCP} & 32.6 & 27.4 & 1.18 \\
\texttt{CA}    & 8.4  & 7.0  & 1.20 \\
\texttt{MKP-M}   & 28.7 & 25.8 & 1.11 \\
\texttt{MKP-L}   & 1283 & 979 & 1.31 \\
\bottomrule
\end{tabular}
\end{table}

According to  \Cref{table:branching-rule}, we observe a 10\% to 30\% improvement on solution time. Moreover, as we discussed in \textbf{Section \ref{sec:prune}}, the three subproblems are pruned within one second and they are almost ``free" to solve. This experiment reveals the efficiency of {\GPO} even when the underlying solver is only accessible through a blackbox. The solution time improvement is not so impressive compared to when we employ {\GPO} as a primal heuristic. However, we believe a 20\% improvement over the state-of-the-art solvers is still non-trivial, taking into account its flexibility.

\subsection{Data-free {\pmvb}}

Next, we evaluate the performance of {\GPO} as a data-free primal heuristic for MIP. In this setting, we use the interior-point solution of the LP relaxation as a surrogate for binary prediction. \texttt{COPT} is used to solve the resulting problems, with a time limit of 3600 seconds. All other solver parameters remain unchanged.

An instance is considered inapplicable for {\GPO} and excluded from the results if either of the following conditions is met:
\begin{itemize}
    \item[1)] $\mathcal{P}_{\mathcal{C}_{\mathcal{U}},
\mathcal{C}_{\mathcal{L}}}$ is immediately certified as infeasible;
\item[2)] both $\mathcal{P}_{\mathcal{C}_{\mathcal{U}},
\mathcal{C}_{\mathcal{L}}}$ and the original problem exceed the time limit.
\end{itemize}
We evaluate two types of datasets: combinatorial problem instances (\tmverbatim{SCP} and \tmverbatim{CA}) from \cite{gasse2019exact}, and benchmark MIPLIB instances from \cite{gleixner2021miplib}. The results are summarized in \Cref{fig:mvbheur-data-free}, \textbf{Table \ref{table:sgm}}, and \textbf{Table \ref{table:ins}}.

\paragraph{Combinatorial datasets.} 
\Cref{fig:mvbheur-data-free} presents the speedup results for both \tmverbatim{Gurobi} and \tmverbatim{COPT} on two types of combinatorial problems: set covering (\tmverbatim{SCP}) and combinatorial auction (\tmverbatim{CA}). As shown in the figure, the data-free version of {\pmvb} still achieves non-trivial speedups, demonstrating its effectiveness even in the absence of training data.

\begin{figure}[h]
\centering
\includegraphics[scale=0.25]{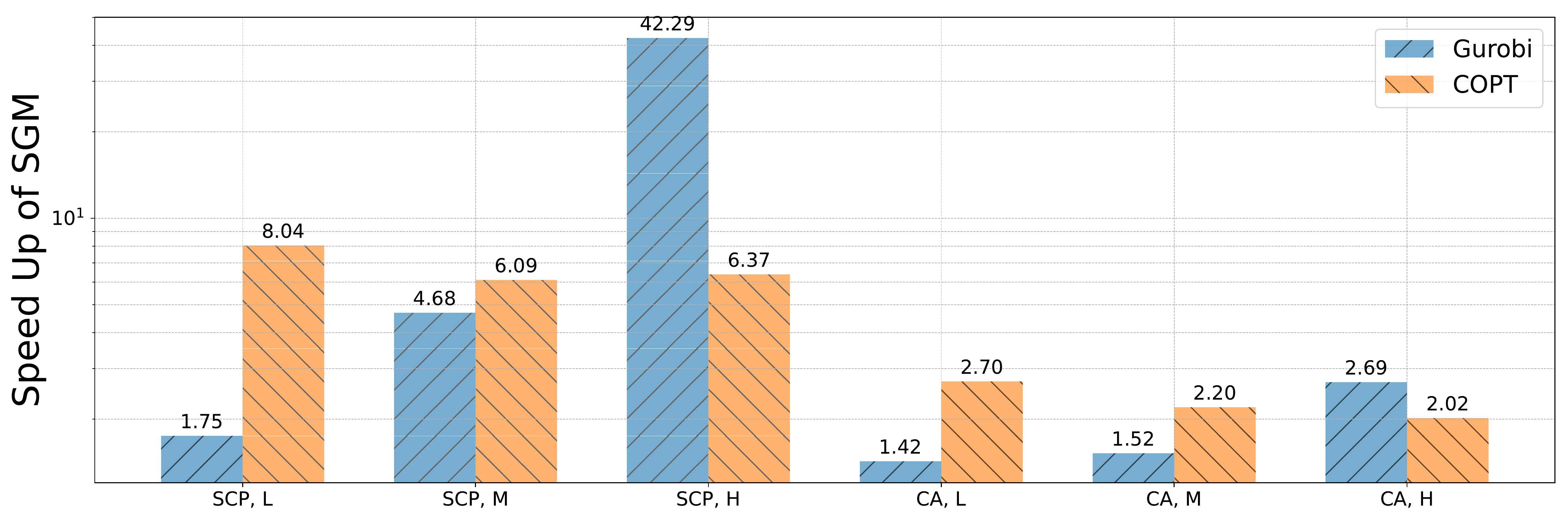}
\caption{Data-free experiments on \texttt{SCP} and \texttt{CA}}
\label{fig:mvbheur-data-free}
\end{figure}

\begin{table}[h!] 
\centering
  \caption{Shifted geometric mean for effective MIPLIB instances \label{table:sgm}}
  \begin{tabular}{cc}
    \toprule
    Parameter & SGM compared to No \tmverbatim{PreMIO}\\
    \midrule
    $\delta = 0.01, \tau = 0.9$ & $> 1.0$\\
    $\delta = 10^{- 8}, \tau = 0.9$ & 0.84\\
    $\delta = 10^{- 15}, \tau = 0.9$5 & 0.94\\
    \bottomrule
  \end{tabular}
\end{table}

\begin{table}[h!]
\centering
  \caption{Statistics of \texttt{COPT} on MIPLIB instances $\delta = 10^{- 8}, \tau =
  0.9$, $f$ denotes objective, $T$ denotes solution time\label{table:ins}}
\resizebox{0.99\columnwidth}{!}{
  \begin{tabular}{cccccccccc}
   \toprule
    Instance & $f_{\text{\tmverbatim{PreMIO}}}$ &
    $T_{\text{\tmverbatim{PreMIO}}}$ & $f_{\text{\tmverbatim{None}}}$ &
    $T_{\text{\tmverbatim{None}}}$ & Instance &
    $f_{\text{\tmverbatim{PreMIO}}}$ & $T_{\text{\tmverbatim{PreMIO}}}$ &
    $f_{\text{\tmverbatim{None}}}$ & $T_{\text{\tmverbatim{None}}}$\\
    \midrule
    academictables & 0.0e+00 & 846.9 & 1.0e+00 & 3600.0 & ci-s4 &
    3.3e+03 & 121.4 & 3.3e+03 & 101.0\\
    acc-tight5 & 0.0e+00 & 2.1 & 0.0e+00 & 4.2 & blp-ic98 & 4.5e+03 & 156.2 &
    4.5e+03 & 332.1\\
    22433 & 2.1e+04 & 0.5 & 2.1e+04 & 0.7 & brazil3 & 2.4e+02 & 70.7 & 2.4e+02
    & 87.7\\
    23588 & 8.1e+03 & 1.1 & 8.1e+03 & 0.9 & comp08-2idx & 3.7e+01 & 18.6 &
    3.7e+01 & 16.2\\
    30\_70\_45\_05\_100 & 9.0e+00 & 12.8 & 9.0e+00 & 13.6 & cmflsp50-24-8-8 &
    5.6e+07 & 2361.9 & 5.6e+07 & 2764.3\\
    amaze22012-06-28i & 0.0e+00 & 0.6 & 0.0e+00 & 0.3 &
    diameterc-msts & 7.3e+01 & 32.4 & 7.3e+01 & 30.7\\
    arki001 & 7.5e+06 & 71.9 & 7.5e+06 & 84.2 & comp07-2idx & 6.0e+00 & 321.2
    & 6.0e+00 & 468.8\\
    assign1-5-8 & 2.1e+02 & 1317.3 & 2.1e+02 & 1306.9 & aflow40b & 1.2e+03 &
    88.9 & 1.2e+03 & 121.0\\
    a1c1s1 & 1.2e+04 & 204.1 & 1.2e+04 & 217.7 & cap6000 & -2.5e+06 & 0.6 &
    -2.5e+06 & 0.3\\
    blp-ar98 & 6.2e+03 & 293.6 & 6.2e+03 & 340.7 & ab71-20-100 & -1.0e+10 &
    3.3 & -1.0e+10 & 3.5\\
    ab51-40-100 & -1.0e+10 & 6.1 & -1.0e+10 & 6.5 & beavma & 3.8e+05 & 0.2 &
    3.8e+05 & 0.2\\
    a2c1s1 & 1.1e+04 & 333.2 & 1.1e+04 & 352.5 & cost266-UUE & 2.5e+07 &
    1549.1 & 2.5e+07 & 1574.1\\
    blp-ar98 & 6.2e+03 & 293.6 & 6.2e+03 & 340.7 & csched007 & 3.5e+02 & 383.2
    & 3.5e+02 & 408.5\\
    blp-ic97 & 4.0e+03 & 441.4 & 4.0e+03 & 465.4 & csched008 & 1.7e+02 & 80.2
    & 1.7e+02 & 255.3\\
    aflow30a & 1.2e+03 & 4.4 & 1.2e+03 & 4.5 & danoint & 6.6e+01 & 650.5 &
    6.6e+01 & 484.3\\
    \bottomrule
  \end{tabular}}
\end{table}

\paragraph{MIPLIB instances.}
We ultimately obtain 57 MIP instances for evaluation. As shown in \textbf{Table \ref{table:sgm}} and \textbf{Table \ref{table:ins}}, we observe an improvement of up to 16\% in SGM across these instances. Several interesting observations emerge:
\begin{itemize}[leftmargin=*]
\item Some instances (e.g., \texttt{academictable}) become solvable on the primal side when solving $\mathcal{P}_{\mathcal{C}_{\mathcal{U}},
\mathcal{C}_{\mathcal{L}}}$.  Moreover, when this problem is feasible, it rarely compromises the optimality of the original problem.
\item It's possible for $\mathcal{P}_{\mathcal{C}_{\mathcal{U}},
\mathcal{C}_{\mathcal{L}}}$ to be infeasible when using the LP solution, especially if the LP relaxation is weak. In such cases, the relaxation may mislead the direction of the solution.
\item Very conservative {\GPO} parameter settings are required. As suggested by \textbf{Table \ref{table:sgm}}, achieving competitive performance in the data-free case requires setting the error parameter 
$\delta$ very close to 0. This contrasts with the data-driven case, where values of $\delta$ up to 0.3 can be used without triggering infeasibility.
\end{itemize}
As a data-independent heuristic, our method also shows promising potential for general MIP solving. We plan to actively explore this direction in future work.

\section{Conclusions}\label{sec:conclusion}
In this paper, we propose {\GPO}, a simple yet effective method for accelerating MIP solving using machine learning models. Grounded in learning theory, our approach draws a connection between concentration inequalities and multi-variable branching hyperplanes, which informs the design of a data-driven branching procedure. {\GPO} offers an interpretable mechanism for incorporating prior knowledge from offline solves into online MIP solving. Furthermore, our formulation—which connects binary random variables with binary predictions—may be of independent interest for the development of future data-driven MIP heuristics.

\renewcommand \thepart{}
\renewcommand \partname{}

\bibliography{ref}
\bibliographystyle{plain}

\appendix

\clearpage

\section{Auxiliary lemmas}\label{appendix:lemma}

\begin{lem}[Hoeffding inequality] \label{lem:hoeffding}
Given independent random variables $Y_1, \ldots, Y_n$ such that $Y_i \in [0,
  1]$, then for any $t > 0$, we have
  \[ \mathbb{P} \{ \textstyle \sum_{i = 1}^n (Y_i -\mathbb{E} [Y_i]) \geq t \}
     \leq  e^{- 2 t^2 / n} . \]
\end{lem}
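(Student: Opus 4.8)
The plan is to use the exponential-moment (Cram\'er--Chernoff) method. First I would center the variables by setting $X_i := Y_i - \mathbb{E}[Y_i]$, so that each $X_i$ has mean zero and is supported on an interval of length one (since $Y_i \in [0,1]$ forces $X_i \in [-\mathbb{E}[Y_i],\, 1 - \mathbb{E}[Y_i]]$), and write $S := \sum_{i=1}^n X_i$. For any $s > 0$, Markov's inequality applied to the nonnegative random variable $e^{sS}$ yields
\[ \mathbb{P}\{S \geq t\} = \mathbb{P}\{e^{sS} \geq e^{st}\} \leq e^{-st}\, \mathbb{E}[e^{sS}] = e^{-st} \prod_{i=1}^n \mathbb{E}[e^{sX_i}], \]
where the factorization of the moment generating function uses the independence of the $X_i$.

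The crux of the argument is to control each factor $\mathbb{E}[e^{sX_i}]$. I would establish Hoeffding's lemma: if a random variable $X$ is centered and supported on an interval $[a,b]$, then $\mathbb{E}[e^{sX}] \leq \exp\!\big(s^2 (b-a)^2/8\big)$. Applying this with $b - a = 1$ gives $\mathbb{E}[e^{sX_i}] \leq e^{s^2/8}$ for each $i$, and hence $\mathbb{E}[e^{sS}] \leq e^{n s^2/8}$. Combining with the Markov step produces the one-parameter family of bounds $\mathbb{P}\{S \geq t\} \leq \exp\!\big(-st + n s^2/8\big)$, valid for every $s > 0$.

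The final step is to optimize over the free parameter $s$. Minimizing the exponent $-st + n s^2 / 8$ gives the stationary point $s = 4t/n$, and substituting back collapses the exponent to $-2t^2/n$, which delivers exactly the claimed inequality $\mathbb{P}\{S \geq t\} \leq e^{-2t^2/n}$.

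The main obstacle is proving Hoeffding's lemma itself. The standard route is to exploit the convexity of $x \mapsto e^{sx}$ on $[a,b]$ to write $e^{sx} \leq \frac{b-x}{b-a} e^{sa} + \frac{x-a}{b-a} e^{sb}$, take expectations, and use $\mathbb{E}[X] = 0$ to eliminate the linear term. Setting $\psi(s) := \log \mathbb{E}[e^{sX}]$, one checks $\psi(0) = \psi'(0) = 0$ and shows $\psi''(s) \leq (b-a)^2/4$ uniformly in $s$; since $\psi''(s)$ is the variance of $X$ under the exponentially tilted measure $d\mathbb{Q} \propto e^{sX}\,d\mathbb{P}$, which remains supported on $[a,b]$, this bound is a Popoviciu-type variance estimate. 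A second-order Taylor expansion then gives $\psi(s) \leq s^2 (b-a)^2/8$. Recognizing $\psi''$ as a tilted variance and bounding it is the technically delicate part; everything else is routine.
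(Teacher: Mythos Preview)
Your proposal is correct and follows the standard Cram\'er--Chernoff route via Hoeffding's lemma; there is nothing to fix. Note, however, that the paper does not actually prove this lemma: it is listed in the appendix as an auxiliary result and stated without proof, so there is no ``paper's own proof'' to compare against. Your argument is exactly the textbook derivation one would expect here.
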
 

\begin{lem}[Bernstein inequality] \label{lem:bernstein}
Given i.i.d. Bernoulli random variables $Y_1, \ldots, Y_n$ with $Y_i \in \{0, 1\}$ and $\mathbb{E}[Y_i] = p$, then for any $t > 0$, we have
  \[ \mathbb{P} \{ \textstyle \sum_{i = 1}^n Y_i - np \geq t \}
     \leq  \exp(-\frac{t^2}{2(np + t/3)}) . \]
\end{lem}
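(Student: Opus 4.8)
The plan is to prove this by the Chernoff (exponential moment) method. First I would center the variables by setting $X_i := Y_i - p$, so that $\mathbb{E}[X_i] = 0$, $|X_i| \leq 1$, and $\mathbb{E}[X_i^2] = p(1-p)$. Writing $S_n = \sum_{i=1}^n Y_i$, the event $\{S_n - np \geq t\}$ coincides with $\{\sum_i X_i \geq t\}$, and for any $\lambda > 0$ Markov's inequality applied to $e^{\lambda \sum_i X_i}$ together with independence gives
$$\mathbb{P}\Big\{\textstyle\sum_i X_i \geq t\Big\} \leq e^{-\lambda t} \prod_{i=1}^n \mathbb{E}[e^{\lambda X_i}].$$
This reduces the whole problem to a single per-coordinate moment-generating-function bound, which is where the Bernstein constant $1/3$ will come from.

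The crux of the argument — and the step I expect to be the main obstacle — is bounding $\mathbb{E}[e^{\lambda X_i}]$ in a way that produces exactly the $np + t/3$ denominator. I would expand the exponential as a power series and use $\mathbb{E}[X_i] = 0$ to kill the linear term, then control the higher moments by $\mathbb{E}[X_i^k] \leq \mathbb{E}[|X_i|^k] \leq M^{k-2}\,\mathbb{E}[X_i^2]$ with $M = 1$. The decisive trick is the elementary factorial estimate $k! \geq 2\cdot 3^{k-2}$ for all $k \geq 2$ (verified termwise, since every factor from the third onward is at least $3$), which collapses the tail of the series into a geometric sum:
$$\mathbb{E}[e^{\lambda X_i}] \leq 1 + \sigma_i^2 \sum_{k \geq 2} \frac{\lambda^k}{k!} \leq 1 + \frac{\lambda^2 \sigma_i^2}{2}\sum_{j \geq 0}\Big(\frac{\lambda}{3}\Big)^j = 1 + \frac{\lambda^2 \sigma_i^2}{2(1 - \lambda/3)} \leq \exp\!\Big(\frac{\lambda^2 \sigma_i^2}{2(1 - \lambda/3)}\Big),$$
valid for $0 < \lambda < 3$, where $\sigma_i^2 = p(1-p)$. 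Taking the product over $i$ and writing $v := np(1-p)$ yields $\prod_i \mathbb{E}[e^{\lambda X_i}] \leq \exp\big(\tfrac{\lambda^2 v}{2(1 - \lambda/3)}\big)$.

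It then remains to optimize the free parameter. Substituting back, I would minimize $-\lambda t + \tfrac{\lambda^2 v}{2(1-\lambda/3)}$ over $\lambda \in (0,3)$; the standard choice $\lambda^\star = \tfrac{t}{v + t/3}$ makes $1 - \lambda^\star/3 = \tfrac{v}{v + t/3}$ and, after a short simplification, drives the exponent to exactly $-\tfrac{t^2}{2(v + t/3)}$. Finally, since $v = np(1-p) \leq np$ enlarges the denominator and hence only weakens the bound, I conclude
$$\mathbb{P}\{S_n - np \geq t\} \leq \exp\!\Big(-\frac{t^2}{2(np(1-p) + t/3)}\Big) \leq \exp\!\Big(-\frac{t^2}{2(np + t/3)}\Big),$$
which is the claim. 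The only genuinely delicate point is the factorial/geometric bound in the second paragraph; the centering, the product over independent factors, and the one-variable optimization are all routine bookkeeping.
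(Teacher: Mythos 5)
Your proof is correct. The paper states this Bernstein inequality as an auxiliary lemma without proof (it is a standard result used only inside the proof of the concentration lemma), so there is no in-paper argument to compare against; your Chernoff derivation --- centering, the per-coordinate MGF bound via $\mathbb{E}[|X_i|^k] \leq \mathbb{E}[X_i^2]$ and $k! \geq 2\cdot 3^{k-2}$, the choice $\lambda^\star = t/(v+t/3)$ with $v = np(1-p)$ yielding exponent $-t^2/(2(v+t/3))$, and the final relaxation $np(1-p) \leq np$ --- is the canonical textbook route, and every step checks out, including the direction of the last weakening.
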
 

\begin{lem}[Chebyshev inequality] \label{lem:chebyshev}
Given a random variance $Y$ such that $\mathbb{V}[Y] < \infty$, then for any $t > 0$, we have 
\[\Pbb\{|Y - \Ebb[Y]| \geq t\} \leq \tfrac{\mathbb{V}[Y]}{t^2}. \]
\end{lem}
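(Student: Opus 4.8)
The plan is to obtain Chebyshev's inequality as a one-step corollary of Markov's inequality applied to the squared, centered random variable. First I would record Markov's inequality: for any nonnegative random variable $Z$ and any $a > 0$, one has $\mathbb{P}\{Z \geq a\} \leq \mathbb{E}[Z]/a$. This follows from the pointwise bound $a \cdot \mathbb{I}\{Z \geq a\} \leq Z$, which holds because on the event $\{Z \geq a\}$ the left-hand side equals $a \leq Z$, while off that event the left-hand side is $0 \leq Z$ by nonnegativity of $Z$. Taking expectations and using linearity and monotonicity gives $a\,\mathbb{P}\{Z \geq a\} = \mathbb{E}[a \cdot \mathbb{I}\{Z \geq a\}] \leq \mathbb{E}[Z]$, and dividing by $a > 0$ yields the claim.

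Next I would apply this with the nonnegative random variable $Z := (Y - \mathbb{E}[Y])^2$ and the threshold $a := t^2 > 0$. The crucial observation is the event identity $\{|Y - \mathbb{E}[Y]| \geq t\} = \{(Y - \mathbb{E}[Y])^2 \geq t^2\}$, valid since $t > 0$ and squaring is monotone on the nonnegative reals. Combining the event identity with Markov's inequality gives
\[
\mathbb{P}\{|Y - \mathbb{E}[Y]| \geq t\} = \mathbb{P}\{(Y - \mathbb{E}[Y])^2 \geq t^2\} \leq \frac{\mathbb{E}\big[(Y - \mathbb{E}[Y])^2\big]}{t^2} = \frac{\mathbb{V}[Y]}{t^2},
\]
where the last equality is simply the definition of the variance.

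There is essentially no obstacle in this argument; the only point that warrants a sentence of care is the hypothesis $\mathbb{V}[Y] < \infty$, which ensures that $\mathbb{E}[(Y - \mathbb{E}[Y])^2]$ is finite (so that $\mathbb{E}[Y]$ is itself well defined and the right-hand side is a meaningful finite bound). The whole proof is self-contained and does not invoke any of the statistical-learning or concentration machinery developed earlier in the paper, so it can be stated in two lines once Markov's inequality is in hand.
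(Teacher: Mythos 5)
Your proof is correct and complete: it is the canonical Markov-inequality argument applied to $(Y - \mathbb{E}[Y])^2$ with threshold $t^2$, which is exactly the standard derivation the paper implicitly relies on (the paper states this auxiliary lemma without proof, as a textbook fact). Nothing further is needed.
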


\begin{lem}\label{lem:concentrate}
    Given a sequence $\{\lambda_i, i = 1,2,\dots, n\}$ which are independent and uniformally distributed in $(0,1)$, for any fixed $\delta \in (0,1)$ and any $1 \leq j \leq 1 /\delta$, define the set $I_j = \{ i: \lambda_i \in [(j-1)\delta, j\delta]$, we have that with probability at least $1 -\lceil \frac{1}{\delta} \rceil \exp(\frac{-n\delta}{4})$, we have
    $$
    |I_j | \leq 2n\delta, \quad \forall 1 \leq j \leq \lceil 1 /\delta \rceil.
    $$
    Besides, for all subset $S \subseteq [n]$ satisfies $|S| \geq 4n\delta$, we have $\textstyle \sum_{i \in S} \lambda_i \geq \frac{|S|^2}{8n}$.
\end{lem}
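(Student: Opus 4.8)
The statement splits into a probabilistic claim about the interval counts $|I_j|$ and a deterministic claim about sums over subsets $S$ that holds on the event where all these counts are controlled. The plan is to establish the count bound first via a binomial concentration argument, and then derive the second inequality as a purely combinatorial consequence of $|I_j| \le 2n\delta$. So I would condition on the ``good event'' $\{|I_j| \le 2n\delta \text{ for all } j\}$ throughout the second half.

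For the first part, observe that since each $\lambda_i \sim \mathcal{U}(0,1)$ independently, the event $\{\lambda_i \in [(j-1)\delta, j\delta]\}$ has probability $\delta$, so $|I_j| = \sum_{i=1}^n \mathbb{I}\{\lambda_i \in [(j-1)\delta, j\delta]\}$ is $\mathrm{Binomial}(n,\delta)$ with mean $n\delta$. I would apply the Bernstein inequality (\Cref{lem:bernstein}) with $t = n\delta$ to control the upper tail, obtaining $\mathbb{P}\{|I_j| \ge 2n\delta\} \le \exp(-\frac{(n\delta)^2}{2(n\delta + n\delta/3)}) = \exp(-\frac{3n\delta}{8}) \le \exp(-\frac{n\delta}{4})$. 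Bernstein rather than Hoeffding is the right tool here: for small $\delta$ the bin probability is small, and Hoeffding would only yield $\exp(-2n\delta^2)$, too weak to match the target $\exp(-n\delta/4)$. A union bound over the $\lceil 1/\delta\rceil$ intervals then gives the claimed failure probability $\lceil 1/\delta\rceil \exp(-n\delta/4)$.

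For the second part, on the good event the number of $\lambda_i$ below any level $k\delta$ satisfies $\#\{i : \lambda_i \le k\delta\} = \sum_{j=1}^k |I_j| \le 2nk\delta$. Fix $S$ with $|S| = s \ge 4n\delta$. The elementary bound $\lambda_i \ge \delta\lfloor \lambda_i/\delta\rfloor = \sum_{k\ge1}\delta\,\mathbb{I}\{\lambda_i > k\delta\}$ lets me rewrite $\sum_{i\in S}\lambda_i \ge \delta\sum_{k\ge 1}\#\{i\in S : \lambda_i > k\delta\}$, and since $\#\{i\in S : \lambda_i > k\delta\} \ge s - 2nk\delta$, I obtain $\sum_{i\in S}\lambda_i \ge \delta\sum_{k=1}^{K}(s - 2nk\delta)$, where $K = \lfloor s/(2n\delta)\rfloor$ keeps exactly the nonnegative terms. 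Writing $a = s/(2n\delta) \ge 2$, this expression simplifies to $n\delta^2 K(2a - K - 1)$.

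The main obstacle is extracting the exact constant $\tfrac18$: a careless estimate of $K(2a-K-1)$ loses a factor of two and only delivers $s^2/(16n)$. The clean route is to note that $K \mapsto K(2a-K-1)$ is a downward parabola with vertex at $a-\tfrac12$, so on the interval $[a-1,a]$ containing $K$ it attains its minimum at the endpoints, both equal to $a^2-a$; hence $K(2a-K-1) \ge a^2 - a$. The hypothesis $s \ge 4n\delta$, i.e. $a \ge 2$, then gives $a^2 - a \ge a^2/2$, so $\sum_{i\in S}\lambda_i \ge n\delta^2 \cdot a^2/2 = s^2/(8n)$, exactly as required. As a final sanity check I would confirm the boundary conventions (half-open versus closed intervals), but since almost surely no $\lambda_i$ lands on a grid point $k\delta$, this does not affect any of the counts.
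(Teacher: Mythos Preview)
Your proposal is correct and follows essentially the same approach as the paper: Bernstein's inequality plus a union bound for the first part, and for the second part the same binning lower bound $\lambda_i \ge (j-1)\delta$ for $i\in I_j$ combined with $|I_j|\le 2n\delta$ (the paper phrases it as a worst-case configuration $S=I_1\cup\cdots\cup I_k$, you via a layer-cake sum, but the arithmetic is identical and both arrive at $n\delta^2(a^2-a)$). Your treatment of the floor $K=\lfloor s/(2n\delta)\rfloor$ via the parabola argument is actually slightly more careful than the paper's, which directly substitutes $k=|S|/(2n\delta)$ without rounding.
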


\begin{proof}
	For any fixed $j$, we know that $\mathbb{P}(\lambda_i \in [(j-1)\delta, j\delta]) = \delta$ for all $i \in [n]$. Then, by applying \Cref{lem:bernstein} with $p = \delta$ and $t = n\delta$, we could show that \begin{equation}\label{ineq:concentration:uniform}
        \Pbb\{|\{i: \lambda_i \in [(j-1)\delta, j\delta]  \}| \geq 2 n \delta \}  \leq \exp(\tfrac{-n\delta}{4}).
    \end{equation}
    According to the union bound, \eqref{ineq:concentration:uniform} holds for all $1\leq i \leq 1/\delta$ with probability at least $1 - \frac{1}{\delta} \exp(\frac{-n\delta}{4})$. Besides, consider the minimal of the sum $\textstyle \sum_{i \in S} \lambda_i$ among all subset $S \subseteq [n]$, which is achieved as we choose $S = I_1 \cup I_2 \cup \cdots \cup I_k$ for some $k$ and each $|I_j|$ is as large as possible, i.e., $|I_j| = 2n\delta$ and $k = |S|/(2n\delta)$. So we have
    $$
    \textstyle \sum_{i \in S} \lambda_i \geq  \textstyle \sum_{j =1}^{|S|/(2n\delta)} 2(j-1)n\delta^2 \geq \frac{|S|}{4n}(|S| - 2n\delta)  \geq \frac{|S|^2}{8n}
    $$
    where the last inequality holds because we assume $|S| \geq 4n\delta$.
\end{proof}

\section{Proof of Theorem \ref{thm:mvb}}
The side $j \in \mathcal{L}$ follows by
  \begin{equation*}
      \begin{aligned}
    \mathbb{P} \{ \textstyle \sum_{j \in \mathcal{L}} y_j^{\star} (\xi) \leq \eta
    \} ={} & \mathbb{P} \{ \textstyle \sum_{j \in \mathcal{L}} y_j^{\star} (\xi)
    - \hat{y}_j (\xi) \leq \eta \} \nonumber\\
    ={} & \mathbb{P} \{ \textstyle \sum_{j \in \mathcal{L}} 1 -\mathbb{I} \{
    y^{\star}_j (\xi) = \hat{y}_j (\xi) \} \leq \eta \} \nonumber\\
    ={} & \mathbb{P} \{ \textstyle \sum_{j \in \mathcal{L}} \mathbb{I} \{ y^{\star}_j
    (\xi) = \hat{y}_j (\xi) \} \geq | \mathcal{L} | - \eta \}
    \nonumber\\
    ={} & \mathbb{P} \{ \textstyle \sum_{j \in \mathcal{L}} \mathbb{I} \{ y^{\star}_j
    (\xi) = \hat{y}_j (\xi) \} - (1 - \delta) \textstyle \sum_{j \in \mathcal{L}}
    \Delta_j^{\delta} \geq | \mathcal{L} | - \eta - (1 - \delta) \textstyle \sum_{j \in
    \mathcal{L}} \Delta_j^{\delta} \} \nonumber\\
    \geq{} & \mathbb{P} \{ \textstyle \sum_{j \in \mathcal{L}} \mathbb{I} \{
    y^{\star}_j (\xi) = \hat{y}_j (\xi) \} - \pi_j \geq | \mathcal{L} | - \eta
    - (1 - \delta) \textstyle \sum_{j \in \mathcal{L}} \Delta_j^{\delta} \}
    \nonumber\\
    \geq{} & 1 -\mathbb{P} \{ \textstyle \sum_{j \in \mathcal{L}} \mathbb{I} \{
    y^{\star}_j (\xi) = \hat{y}_j (\xi) \} - \pi_j \leq | \mathcal{L} | - \eta
    - (1 - \delta) \textstyle \sum_{j \in \mathcal{L}} \Delta_j^{\delta} \}
    \nonumber\\
    \geq{} & 1 - \exp ( - 2\tfrac{ [| \mathcal{L} | - \eta - (1 - \delta)
     \sum_{j \in \mathcal{L}} \Delta_j^{\delta} ]}{| \mathcal{L} |} ) .
    \nonumber
\end{aligned}
  \end{equation*}
  Taking $\eta = | \mathcal{L} | - (1 - \delta) \textstyle \sum_{j \in \mathcal{L}}
  \Delta_j^{\delta} + \gamma$ completes the proof.

\end{document}